\providecommand{\U}[1]{\protect\rule{.1in}{.1in}}
\newtheorem{theorem}{Theorem}[section]
\newtheorem{corollary}[theorem]{Corollary}
\newtheorem{lemma}[theorem]{Lemma}
\newtheorem{proposition}[theorem]{Proposition}
\newtheorem{remark}[theorem]{Remark}
\numberwithin{equation}{section}
\newenvironment{proof}[1][Proof]{\noindent\textbf{#1.} }{\ \rule{0.5em}{0.5em}}
\begin{document}

\title{The limiting behavior of solutions to $p$-Laplacian problems with convection
and exponential terms}
\author{Anderson L. A. de Araujo\\{\small Universidade Federal de Vi\c{c}osa}\\{\small Vi\c{c}osa, MG, 36.570-900, Brazil}\\{\small anderson.araujo@ufv.br\medskip}\\Grey Ercole\\{\small Universidade Federal de Minas Gerais}\\{\small Belo Horizonte, MG, 30.123-970, Brazil}\\{\small grey@mat.ufmg.br\medskip}\\Julio C. Lanazca Vargas\\{\small Universidade Federal de Minas Gerais}\\{\small Belo Horizonte, MG, 30.123-970, Brazil}\\{\small jlanazca@gmail.com\medskip}}
\maketitle

\begin{abstract}
\noindent We consider, for $a,l\geq1,$ $b,s,\alpha>0,$ and $p>q\geq1,$ the
homogeneous Dirichlet problem for the equation $-\Delta_{p}u=\lambda
u^{q-1}+\beta u^{a-1}\left\vert \nabla u\right\vert ^{b}+mu^{l-1}e^{\alpha
u^{s}}$ in a smooth bounded domain $\Omega\subset\mathbb{R}^{N}.$ We prove
that under certain setting of the parameters $\lambda,$ $\beta$ and $m$ the
problem admits at least one positive solution. Using this result we prove that
if $\lambda,\beta>0$ are arbitrarily fixed and $m$ is sufficiently small, then
the problem has a positive solution $u_{p},$ for all $p$ sufficiently large.
In addition, we show that $u_{p}$ converges uniformly to the distance function
to the boundary of $\Omega,$ as $p\rightarrow\infty.$ This convergence result
is new for nonlinearities involving a convection term.

\bigskip

{\small \noindent\textbf{2020 MSC:} 35B40, 35J92.}

\noindent\textbf{keywords}{\small \textbf{:} Convection term, distance
function, exponential term, gradient estimate.}

\end{abstract}

\section{Introduction}

In this paper we consider the Dirichlet problem
\begin{equation}
\left\{
\begin{array}
[c]{lll}%
-\Delta_{p}u=\lambda u^{q-1}+\beta u^{a-1}\left\vert \nabla u\right\vert
^{b}+mu^{l-1}e^{\alpha u^{s}} & \text{in} & \Omega\\
u>0 & \text{in} & \Omega\\
u=0 & \text{on} & \partial\Omega
\end{array}
\right.  \tag{P}\label{P}%
\end{equation}
where $\Omega$ is a smooth bounded domain $\mathbb{R}^{N},$ $N\geq2,$ and
$\Delta_{p}u:=\operatorname{div}(\left\vert \nabla u\right\vert ^{p-2}\nabla
u)$ is the $p$-Laplacian operator, $p>1$. The parameters $\lambda,$ $\beta$
and $m$ are nonnegative and the constants $q,a,b,l,\alpha$ and $s$ satisfy
\[
a,l\geq1,\text{ \ }b>0,\text{ \ }s,\alpha\geq0,\text{ \ and }p>q\geq1.
\]

Our main result in this paper is stated as follows, where $d_{\Omega}$ denotes
the distance function to the boundary:
\[
d_{\Omega}(x):=\min_{y\in\partial\Omega}\left\vert x-y\right\vert ,\text{
\ }x\in\overline{\Omega}.
\]

\begin{theorem}
\label{main}Assume that $\partial\Omega\in C^{1,1}.$ Let $\lambda$ and $\beta$
be arbitrary, but fixed positive real numbers, and let $m$ be such that
\begin{equation}
0<m<m_{\infty}:=(\lambda\left\Vert d_{\Omega}\right\Vert _{\infty}^{q-1}%
+\beta\left\Vert d_{\Omega}\right\Vert _{\infty}^{a-1})\left\Vert d_{\Omega
}\right\Vert _{\infty}^{l-1}e^{-\alpha\left\Vert d_{\Omega}\right\Vert
_{\infty}^{s}}. \label{minf}%
\end{equation}

There exists $p_{0}>\max\left\{  q,a+b,l\right\}  $ such that if $p>p_{0}$
then the Dirichlet problem (\ref{P}) admits a weak solution $u_{p}\in
W_{0}^{1,p}(\Omega).$ Moreover,%
\begin{equation}
\lim_{p\rightarrow\infty}u_{p}=d_{\Omega}\text{ uniformly in }\overline
{\Omega}. \label{upd}%
\end{equation}

\end{theorem}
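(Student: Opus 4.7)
The plan is to combine the existence result already established in the paper with uniform (in $p$) a priori estimates and a barrier/comparison argument identifying the limit. First I would invoke the earlier existence theorem of the paper to produce the solution $u_p$ for each large $p$: that theorem requires $m$ to lie below a $p$-dependent threshold $m_p$ expressed in terms of norms like $\|d_\Omega\|_{p'}$ or the first eigenvalue of $-\Delta_p$, and the initial task is to verify that $m_p\to m_\infty$ as $p\to\infty$. Once this limit is identified, the strict inequality $m<m_\infty$ automatically yields $m<m_p$ for every $p>p_0$, giving the family $\{u_p\}_{p>p_0}\subset W_0^{1,p}(\Omega)$.

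Next I would establish uniform bounds. The bound $\limsup_{p\to\infty}\|u_p\|_\infty\le \|d_\Omega\|_\infty$ should come from testing $u_p$ against powers of itself (Moser iteration), exploiting the smallness of $m$ together with $p>\max\{q,a+b,l\}$ so that every exponent in the right-hand side is subcritical with respect to the $p$-Laplacian. Combining this with Lieberman-type $C^{1,\alpha}$ regularity for the $p$-Laplacian with gradient-dependent data, one then derives a uniform gradient estimate $\limsup_{p\to\infty}\|\nabla u_p\|_\infty\le 1$. These two estimates together imply that $\{u_p\}$ is uniformly bounded and asymptotically $1$-Lipschitz on $\overline\Omega$.

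With the bounds in hand, Arzelà--Ascoli extracts a subsequence $u_{p_k}\to u_\infty$ uniformly, with $u_\infty\in C(\overline\Omega)$, $u_\infty=0$ on $\partial\Omega$, and $\mathrm{Lip}(u_\infty)\le 1$; this gives $u_\infty\le d_\Omega$. For the opposite inequality I would build a lower barrier: because $m<m_\infty$ (which is exactly the inequality obtained by evaluating the nonlinearity at $u=\|d_\Omega\|_\infty$ with $|\nabla u|=1$), the function $(1-\varepsilon)d_\Omega$, or a smooth approximation thereof, becomes a strict subsolution to (\ref{P}) for all sufficiently large $p$, and a weak comparison principle for the $p$-Laplacian then forces $u_p\ge (1-\varepsilon)d_\Omega$; passing to the limit in $p$ and then in $\varepsilon\downarrow 0$ yields $u_\infty\ge d_\Omega$. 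Since the limit $d_\Omega$ is unique, the full family (not just subsequences) converges, giving (\ref{upd}).

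The main obstacle is the uniform gradient estimate in the presence of the convection term $\beta u_p^{a-1}|\nabla u_p|^{b}$: the gradient appears on the right-hand side, so naive applications of Lieberman's estimate produce constants that may blow up with $p$. One has to absorb $|\nabla u_p|^b$ into the principal $p$-Laplacian part using $b<p$, and then iterate the estimate in a $p$-stable way, which is precisely the novelty emphasized in the abstract. Once this step is in place, the rest of the scheme is a standard limit-of-$p$-Laplacian argument à la Kawohl, adapted to accommodate the exponential and convection nonlinearities.
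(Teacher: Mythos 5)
Your overall scaffolding (existence via the paper's threshold result with $m_p\to m_\infty$, uniform bounds, Arzel\`a--Ascoli, $u_\infty\le d_\Omega$ from the asymptotic $1$-Lipschitz bound, then a lower bound identifying the limit) matches the paper's architecture, but two of your key steps have genuine gaps. First, the uniform gradient estimate: you propose Moser iteration plus Lieberman regularity, and you correctly flag that Lieberman's constants are not uniform in $p$ and that the convection term must somehow be ``absorbed and iterated in a $p$-stable way'' --- but you never supply that step, and it is precisely the hard point. The paper does not absorb anything: by the sub-supersolution/fixed-point construction, $u_p$ lies in a set $F$ on which the \emph{frozen} right-hand side is bounded in $L^\infty$ by $\left(M_p/\left\Vert\phi_p\right\Vert_\infty\right)^{p-1}$, so the gradient bound $\left\Vert\nabla u_p\right\Vert_\infty\le k_p M_p/\left\Vert\phi_p\right\Vert_\infty$ is built in from the start; the $p$-uniformity then reduces to showing $k_p\to1$, which the paper gets from the Cianchi--Maz'ya global estimate adapted by Ercole, $\left\Vert\nabla v\right\Vert_\infty^{p-1}\le c\,p^{\gamma}\left\Vert g\right\Vert_\infty$ with $c,\gamma$ independent of $p$, whence $k_p\le(cp^\gamma)^{1/(p-1)}\to1$. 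Without this (or an equivalent $p$-uniform estimate), your asymptotic Lipschitz bound is unproved.

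Second, your identification of the limit via the barrier $(1-\varepsilon)d_\Omega$ and a weak comparison principle fails on both counts. (a) $(1-\varepsilon)d_\Omega$ is not a weak subsolution: since $\left\vert\nabla d_\Omega\right\vert=1$ a.e., $-\Delta_p\bigl((1-\varepsilon)d_\Omega\bigr)=(1-\varepsilon)^{p-1}(-\Delta d_\Omega)$ as a distribution, and $-\Delta d_\Omega$ carries a nonnegative singular measure on the ridge set (in one dimension, $\Omega=(-1,1)$ gives $-\Delta d_\Omega=2\delta_0$), which cannot be dominated by any $L^\infty$ right-hand side no matter how small the factor $(1-\varepsilon)^{p-1}$; a smooth approximation from below that remains a subsolution is essentially as hard to construct as the result itself. (b) Even granting a subsolution, no comparison principle applies to (\ref{P}): the nonlinearity is increasing in $u$ (superhomogeneous, through $u^{l-1}e^{\alpha u^s}$) and depends on $\nabla u$, so a subsolution of the same equation need not lie below $u_p$ --- indeed the paper's own lower bound by comparison, $u_p\ge(\lambda/\lambda_p)^{1/(p-q)}e_p$, only yields $u_\infty\ge\left\Vert d_\Omega\right\Vert_\infty e_\infty$, and $e_\infty\ne d_\Omega/\left\Vert d_\Omega\right\Vert_\infty$ in general (e.g.\ the square), showing comparison-type lower bounds alone cannot identify the limit. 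The paper's actual mechanism is variational: freezing $h_p:=\beta u_p^{a-1}\left\vert\nabla u_p\right\vert^b+mu_p^{l-1}e^{\alpha u_p^s}$, the function $u_p$ is the unique positive minimizer (D\'iaz--Saa, using $q<p$) of $I_p(v)=\frac1p\left\Vert\nabla v\right\Vert_p^p-\frac1q\int_\Omega\left\vert v\right\vert^q\,\mathrm{d}x-\int_\Omega h_pv\,\mathrm{d}x$, and testing $I_p(u_p)\le I_p(d_\Omega)$ with $\left\vert\nabla d_\Omega\right\vert=1$ a.e.\ gives
\begin{equation*}
\frac1q\int_\Omega\bigl(d_\Omega^q-u_p^q\bigr)\,\mathrm{d}x+\int_\Omega h_p\bigl(d_\Omega-u_p\bigr)\,\mathrm{d}x\le\frac{\left\vert\Omega\right\vert}{p},
\end{equation*}
which together with $u_\infty\le d_\Omega$ and the uniform bound on $h_p$ forces $\int_\Omega(d_\Omega^q-u_\infty^q)\,\mathrm{d}x=0$, hence $u_\infty=d_\Omega$. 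You would need to replace your barrier step with an argument of this type (or an independent viscosity-solution argument) for the proof to close.
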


In the last decades, this kind of limiting behavior of solutions to Dirichlet
problems of the form%
\[
\left\{
\begin{array}
[c]{rrll}%
-\operatorname{div}(\phi_{p}(\left\vert \nabla u\right\vert )\nabla u) & = &
f(x,u) & \text{in }\Omega\\
u & > & 0 & \text{in }\Omega,\\
u & = & 0 & \text{on }\partial\Omega
\end{array}
\right.
\]
has been obtained by many authors. We refer to \cite{BDM}, \cite{K1},
\cite{Mi18} for the $p$-Laplacian, i.e. $\phi_{p}(t)=t^{p-2}$, and
\cite{Bo17}, \cite{GE1}, \cite{EFMP}, \cite{FM19}, \cite{FMD17}, \cite{GD21}
for more general functions $\phi_{p}.$

However, up to our acknowledge, this is the first work dealing with the
limiting behavior of solutions $u_{p}$ to a $p$-family of Dirichlet problems
with convection (i.e. gradient) terms.

The solution $u_{p}$ of Theorem \ref{main} is obtained as an application of
Theorem \ref{sbSP} stated below. To properly state this existence result let
us fix the notation, give some definitions and recall some facts.

The standard norm of Lebesgue space $L^{t}(\Omega),$ $1\leq t\leq\infty,$ will
be denoted by $\left\Vert \cdot\right\Vert _{t}.$

We will denote by $\phi_{p}$ the $p$-torsion function associated with
$\Omega,$ that is, the weak solution in the Sobolev space $W_{0}^{1,p}%
(\Omega)$ to the $p$-torsional creep problem%
\[
\left\{
\begin{array}
[c]{rrll}%
-\Delta_{p}v & = & 1 & \text{in }\Omega\\
v & = & 0 & \text{on }\partial\Omega.
\end{array}
\right.
\]

The first eigenvalue of the Dirichlet $p$-Laplacian will be denoted by
$\lambda_{p}$ whereas $e_{p}$ will denote the positive and $L^{\infty}%
$-normalized eigenfunction corresponding to $\lambda_{p}$ (so that, $e_{p}>0$
in $\Omega$ and $\left\Vert e_{p}\right\Vert _{\infty}=1$). We recall that
\[
\lambda_{p}:=\inf\left\{  \frac{\left\Vert \nabla v\right\Vert _{p}^{p}%
}{\left\Vert v\right\Vert _{p}^{p}}:v\in W_{0}^{1,p}(\Omega)\setminus\left\{
0\right\}  \right\}  =\frac{\left\Vert \nabla e_{p}\right\Vert _{p}^{p}%
}{\left\Vert e_{p}\right\Vert _{p}^{p}}%
\]
and also that $e_{p}$ is a weak solution to the Dirichlet problem
\[
\left\{
\begin{array}
[c]{rrll}%
-\Delta_{p}v & = & \lambda_{p}\left\vert v\right\vert ^{p-2}v & \text{in
}\Omega\\
v & = & 0 & \text{on }\partial\Omega.
\end{array}
\right.
\]

Let us define
\begin{equation}
k_{p}:=\sup_{w\in\mathcal{S}_{p}}\left\Vert \nabla w\right\Vert _{\infty}
\label{kp}%
\end{equation}
where
\begin{equation}
\mathcal{S}_{p}:=\left\{  w\in W_{0}^{1,p}(\Omega):-\Delta_{p}w=g\text{ in
}\Omega,\text{ for some }g\in L^{\infty}(\Omega),\text{ }\left\Vert
g\right\Vert _{\infty}=1\right\}  . \label{Sp}%
\end{equation}
(Note that $\phi_{p}\in\mathcal{S}_{p}$.)

For each $M>0$ let $\mathcal{E}(M)$ denote the region of $\mathbb{R}_{+}^{3}$
defined by%
\[
\mathcal{E}(M):=\left\{  (\lambda,\beta,m)\in\mathbb{R}_{+}^{3}:\frac{\lambda
A_{p}}{M^{p-q}}+\frac{\beta B_{p}}{M^{p-(a+b)}}+\frac{mA_{p}}{M^{p-l}%
e^{-\alpha M^{s}}}\leq1\right\}  ,
\]
where
\begin{equation}
A_{p}:=\left\Vert \phi_{p}\right\Vert _{\infty}^{p-1}\text{ \ and \ }%
B_{p}:=k_{p}^{b}\left\Vert \phi_{p}\right\Vert _{\infty}^{p-1-b}. \label{ApBp}%
\end{equation}

Now, we can state our main existence result.

\begin{theorem}
\label{sbSP}Assume that $\partial\Omega\in C^{1,\tau}$ for some $\tau\in(0,1)$
and suppose that $(\lambda,\beta,m)\in\mathcal{E}(M)$ for some $M>0.$ Then,
the Dirichlet problem (\ref{P}) admits at least one weak solution $u_{p}\in
W_{0}^{1,p}(\Omega)$ satisfying the bounds
\begin{equation}
\left(  \frac{\lambda}{\lambda_{p}}\right)  ^{1/(p-q)}e_{p}\leq u_{p}\leq
\frac{M}{\left\Vert \phi_{p}\right\Vert _{\infty}}\phi_{p}\text{ \ and
\ }\left\Vert \nabla u_{p}\right\Vert _{\infty}\leq\frac{k_{p}M}{\left\Vert
\phi_{p}\right\Vert _{\infty}}. \label{bounds}%
\end{equation}

\end{theorem}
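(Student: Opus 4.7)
My plan is to obtain $u_p$ as a fixed point of a compact operator on a closed convex subset of $C^1(\overline{\Omega})$ via Schauder's fixed point theorem. Set
\[
\underline{u} := \left(\frac{\lambda}{\lambda_p}\right)^{1/(p-q)}\! e_p, \qquad \overline{u} := \frac{M}{\|\phi_p\|_\infty}\phi_p,
\]
and let $\mathcal{C}$ be the set of $w \in C^1(\overline{\Omega})$ satisfying $\underline{u} \leq w \leq \overline{u}$ in $\Omega$ and $\|\nabla w\|_\infty \leq Mk_p/\|\phi_p\|_\infty$; then $\mathcal{C}$ is closed, convex, and bounded in $C^1(\overline{\Omega})$. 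For each $w \in \mathcal{C}$, define $T(w) \in W_0^{1,p}(\Omega)$ to be the unique weak solution of $-\Delta_p u = h_w$ in $\Omega$ with $u = 0$ on $\partial\Omega$, where $h_w := \lambda w^{q-1} + \beta w^{a-1}|\nabla w|^b + m w^{l-1} e^{\alpha w^s} \in L^\infty(\Omega)$.

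The heart of the argument is the uniform pointwise bound
\[
h_w \leq \lambda M^{q-1} + \beta M^{a-1}\!\left(\frac{Mk_p}{\|\phi_p\|_\infty}\right)^{\!b} + m M^{l-1} e^{\alpha M^s} \leq \frac{M^{p-1}}{A_p} = -\Delta_p \overline{u},
\]
where the last inequality is precisely the hypothesis $(\lambda,\beta,m) \in \mathcal{E}(M)$ after multiplication by $A_p/M^{p-1}$. The weak comparison principle then gives $T(w) \leq \overline{u}$, while the scaling property $\|\nabla v\|_\infty \leq k_p\|g\|_\infty^{1/(p-1)}$ valid for any solution of $-\Delta_p v = g$ (immediate from the definitions of $\mathcal{S}_p$ and $k_p$ after rescaling $v \mapsto v/\|g\|_\infty^{1/(p-1)}$) yields $\|\nabla T(w)\|_\infty \leq Mk_p/\|\phi_p\|_\infty$. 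For the lower bound, a direct computation shows $-\Delta_p \underline{u} = (\lambda/\lambda_p)^{(p-1)/(p-q)}\lambda_p e_p^{p-1} = \lambda \underline{u}^{q-1} e_p^{p-q} \leq \lambda \underline{u}^{q-1}$, using $e_p \leq 1$ and $p > q$. Since $w \geq \underline{u}$ and $q \geq 1$, $-\Delta_p T(w) = h_w \geq \lambda w^{q-1} \geq \lambda \underline{u}^{q-1} \geq -\Delta_p \underline{u}$, and the weak comparison principle again gives $T(w) \geq \underline{u}$. The same reasoning with $\beta = m = 0$ shows that $\overline{u}$ is a supersolution and $\underline{u}$ a subsolution for $-\Delta_p z = \lambda z^{q-1}$; the standard comparison for sublinear $p$-Laplacian problems (valid since $q < p$) then yields $\underline{u} \leq \overline{u}$, so $\overline{u} \in \mathcal{C}$ and $\mathcal{C}$ is nonempty.

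For compactness and continuity of $T$: because $\|h_w\|_\infty$ is uniformly bounded on $\mathcal{C}$, Lieberman's global $C^{1,\alpha'}$-regularity (applicable since $\partial\Omega \in C^{1,\tau}$) provides a uniform bound on $\|T(w)\|_{C^{1,\alpha'}(\overline{\Omega})}$, so $T(\mathcal{C})$ is relatively compact in $C^1(\overline{\Omega})$ by Arzelà--Ascoli. For continuity, $w_n \to w$ in $C^1(\overline{\Omega})$ with $w_n \in \mathcal{C}$ implies $h_{w_n} \to h_w$ uniformly (the exponential factor stays controlled since $w_n \leq M$), and then the continuity of the $p$-Laplacian Dirichlet solution map on $L^\infty$ data together with the $C^{1,\alpha'}$ bound forces $T(w_n) \to T(w)$ in $C^1(\overline{\Omega})$. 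Schauder's theorem thus produces $u_p \in \mathcal{C}$ with $T(u_p) = u_p$, the sought weak solution satisfying (\ref{bounds}). The most delicate technical points I anticipate are the sub-/super-solution comparison for the sublinear term $\lambda z^{q-1}$ (which really needs a Díaz--Saá/Picone type inequality, rather than bare weak comparison, when $q$ is close to $1$) and the careful bookkeeping that identifies the hypothesis $(\lambda,\beta,m) \in \mathcal{E}(M)$ with the estimate making $\overline{u}$ a supersolution for the full problem (\ref{P}).
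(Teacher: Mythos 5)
Your proof is correct, and it reaches the same conclusion by a structurally different fixed-point scheme. The paper keeps the sublinear term inside the equation defining the operator: there, $T(u)$ solves $-\Delta_p (T(u))=\lambda (T(u))^{q-1}+\beta u^{a-1}|\nabla u|^{b}+mu^{l-1}e^{\alpha u^{s}}$, so each iterate is a semilinear problem whose well-posedness rests on the D\'{\i}az--Saa uniqueness theorem \cite{DS} and whose solvability between $\underline{u}$ and $\overline{u}$ is produced by the sub-supersolution method. You instead freeze the entire right-hand side at $w$, so $T(w)$ solves $-\Delta_p u=h_w$ with prescribed $L^{\infty}$ data; then both inclusions $\underline{u}\leq T(w)\leq\overline{u}$ follow from the plain weak comparison principle for the monotone operator $-\Delta_p$ applied to ordered data (your identity $-\Delta_p\underline{u}=\lambda\underline{u}^{q-1}e_p^{p-q}\leq\lambda\underline{u}^{q-1}\leq h_w$ is correct, though note it actively uses the cone constraint $w\geq\underline{u}$, which the paper's version does not need), and continuity of $T$ becomes easier since $w\mapsto h_w$ is continuous from $C^{1}(\overline{\Omega})$ to $L^{\infty}(\Omega)$. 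What your route costs is the ordering $\underline{u}\leq\overline{u}$, needed for nonemptiness of $\mathcal{C}$: you invoke sub/supersolution comparison for the sublinear problem $-\Delta_p z=\lambda z^{q-1}$, which, as you rightly flag, is not bare weak comparison and requires a D\'{\i}az--Saa/Picone-type argument (it does apply here, since $\underline{u}/\overline{u}$ and $\overline{u}/\underline{u}$ are bounded by Hopf's lemma applied to $\phi_p$ and $e_p$). The paper disposes of this step elementarily: from $\lambda A_p\leq M^{p-q}$, which is part of $(\lambda,\beta,m)\in\mathcal{E}(M)$, together with the lemma $1\leq\lambda_p^{1/(p-1)}\left\Vert \phi_p\right\Vert_{\infty}$, it derives
\[
\left(\frac{\lambda}{\lambda_p}\right)^{(p-1)/(p-q)}\lambda_p\leq\left(\frac{M}{\left\Vert \phi_p\right\Vert_{\infty}}\right)^{p-1},
\]
hence $-\Delta_p\underline{u}\leq-\Delta_p\overline{u}$ pointwise, and plain weak comparison yields $\underline{u}\leq\overline{u}$; adopting this would remove the only heavy ingredient from your argument. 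The remaining bookkeeping --- the identification of $(\lambda,\beta,m)\in\mathcal{E}(M)$ with the supersolution inequality, the gradient bound via the rescaling that defines $k_p$, and Lieberman's uniform $C^{1,\eta_p}$ estimate feeding Arzel\`a--Ascoli and Schauder --- coincides with the paper's.
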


We emphasize that this existence result does not impose any restriction
neither to the exponent $s$ in the exponential term nor to the exponent $b$ in
the convection term, respectively to the critical values $\frac{N}{N-1}$
(Trudinger-Moser inequality) and $p$ (the natural growth of the gradient).

The proof\ of Theorem \ref{sbSP} is given in Section \ref{sec1}. It is
inspired by the approach introduced by Bueno and Ercole \cite{BE}, which
relies on a combination of the sub-super solution method with a version of the
Schauder Fixed Point Theorem.

In \cite{Djairo}, de Figueiredo, Gossez, Quoirin and Ubilla proved existence
results for the following class of $p$-Laplacian problems
\[
\left\{
\begin{array}
[c]{rrll}%
-\Delta_{p}u & = & g(u)\left\vert \nabla u\right\vert ^{p}+f(x,u) & \text{in
}\Omega\\
u & > & 0 & \text{in }\Omega,\\
u & = & 0 & \text{on }\partial\Omega.
\end{array}
\right.
\]

Their results apply to the following particular nonlinearities (in our
notation), where $p^{\ast}$ denotes the well-known critical Sobolev exponent:

\begin{itemize}
\item[(a)] $\beta\left\vert \nabla u\right\vert ^{p}+u^{l-1}e^{\alpha u},$
with $\beta>0,$ $l>p$ and $0<\alpha<\beta\frac{p^{\ast}-p}{p-1}$
(\cite[Example 2.3]{Djairo});

\item[(b)] $u^{a-1}\left\vert \nabla u\right\vert ^{p}+mu^{p-1}e^{\alpha
u^{a}},$ with $a>1,$ $0<\alpha<\frac{p^{\ast}-p}{(p-1)a}$ and $0<m<\lambda
_{p}$ (\cite[Example 2.9]{Djairo});

\item[(c)] $\beta\left\vert \nabla u\right\vert ^{p}+u^{l-1},$ with $\beta>0$
\ and $l>p$ (\cite[Example 2.13]{Djairo}); and

\item[(d)] $\beta\left\vert \nabla u\right\vert ^{p}+mu^{l-1}e^{\alpha u},$
\ with \ $\beta>0,$ $0<\alpha<\beta\frac{p^{\ast}-p}{p-1},$ $1\leq l<p$ \ and
$m$ positive, sufficiently small (\cite[Theorem 2.17]{Djairo}).
\end{itemize}

Our Theorem \ref{sbSP} complements the existence results for these particular
nonlinearities, with a sublinear term $\lambda u^{q-1}$ ($1\leq q<p$) added
and with $\left\vert \nabla u\right\vert ^{b}$ ($b>0$) in the place of
$\left\vert \nabla u\right\vert ^{p}.$ Indeed, items (a) and (c) are
complemented by Corollary \ref{cor2}, item (b) is complemented by Corollary
\ref{cor3}, and item (d) is complemented by Corollary \ref{up}.

In \cite{AM23} de Araujo and Montenegro considered the following Dirichlet
problem (in our notation)%
\[
\left\{
\begin{array}
[c]{rrll}%
-\Delta_{p}u & = & \lambda u^{q-1}+u^{l-1}e^{\alpha u^{s}} & \text{in }%
\Omega\\
u & > & 0 & \text{in }\Omega,\\
u & = & 0 & \text{on }\partial\Omega
\end{array}
\right.
\]
where $1<q<p,$ $l\geq1$ and $s>0.$ Using the approach introduced in \cite{BE}
they proved an existence result for $\lambda$ and $\alpha$ sufficiently small,
by assuming $l\not =p.$ Besides including a convection term, our Theorem
\ref{sbSP} complements Theorem 1.1 of \cite{AM23}.

Still in Section \ref{sec1} we present two more applications of Theorem
\ref{sbSP} (see Corollaries \ref{cor1} and \ref{cor2}) that extend a recent
existence result obtained by de Araujo and Faria in \cite{AL}.

In Section \ref{none} we prove, as consequence of the Picone's inequality, a
nonexistence result for the Dirichlet problem%
\[
\left\{
\begin{array}
[c]{rrll}%
-\Delta_{p}u & = & mu^{l-1}e^{\alpha u^{s}}+g(x) & \text{in }\Omega\\
u & = & 0 & \text{on }\partial\Omega,
\end{array}
\right.
\]
stated in Proposition \ref{bm}, for $g\in L^{\infty}(\Omega),$ $g\geq0,$
$1\leq l<p$ and $\alpha,s>0.$ This result generalizes Theorem 2.1 by Garcia
Alonso and Peral Alonso in \cite{Peral} and also shows that a restriction for
the parameter $m$ in (\ref{P}) is to be expected when $1\leq l<p$.

In Section \ref{sec2} we prove Theorem \ref{main}. To obtain $u_{p}$ we apply
Theorem \ref{sbSP}, and to achieve the limiting behavior (\ref{upd}) we show
that%
\[
\lim_{p\rightarrow\infty}k_{p}=1
\]
where $k_{p}$ is defined in (\ref{kp}). (Recall that $k_{p}$ appears in
(\ref{ApBp}) and (\ref{bounds})). A crucial step in the proof of this limit
comes from the estimate
\[
\left\Vert \nabla w\right\Vert _{\infty}\leq(cp^{\gamma})^{\frac{1}{p-1}%
}\text{\ \ for all }w\in\mathcal{S}_{p},\text{ }p\geq2,
\]
where $c$ and $\gamma$ are positive constants independent of $p$ and $w.$ Such
an estimate is deduced by applying a version of the global gradient estimate
by Cianchi and Maz'ya (see \cite{CM11}) adapted for the $p$-Laplacian by
Ercole in \cite{GE}.

\section{Existence and applications\label{sec1}}

In this section we assume that $\partial\Omega$ is at least of class
$C^{1,\tau}$ for some $\tau\in(0,1).$

We recall that in the particular case where $\Omega$ is a ball centered at
$x_{0}\in\mathbb{R}^{N}$ with radius $R>0$ the function $\phi_{p}$ is radially
symmetric, radially decreasing and explicitly given by the expression%

\[
\phi_{p}(x)=\frac{p-1}{p}N^{-\frac{1}{p-1}}\left(  R^{\frac{p}{p-1}%
}-\left\vert x-x_{0}\right\vert ^{\frac{p}{p-1}}\right)  ,\text{ }\left\vert
x-x_{0}\right\vert \leq R.
\]
It follows from this formula that%
\begin{equation}
\left\Vert \phi_{p}\right\Vert _{\infty}=\frac{p-1}{p}N^{-\frac{1}{p-1}%
}R^{\frac{p}{p-1}}. \label{radmax}%
\end{equation}

As for a general bounded domain $\Omega,$ one can combine Schwarz
symmetrization and (\ref{radmax}) to derive the upper bound
\begin{equation}
\left\Vert \phi_{p}\right\Vert _{\infty}\leq M_{0}:=\frac{p-1}{p}N^{-\frac
{1}{p-1}}\left(  \frac{\left\vert \Omega\right\vert }{\omega_{N}}\right)
^{\frac{p}{N(p-1)}}, \label{bt}%
\end{equation}
where $\left\vert \Omega\right\vert $ denotes the volume of $\Omega$ and
$\omega_{N}$ denotes the volume of the unit ball.

In sequel $\left\vert \cdot\right\vert _{1,\eta}$ will denote the norm of
$C^{1,\eta}(\overline{\Omega})$ defined by%
\[
\left\vert u\right\vert _{1,\eta}:=\left\Vert u\right\Vert _{\infty
}+\left\Vert \nabla u\right\Vert _{\infty}+\sum_{i=1}^{N}\left[  u_{x_{i}%
}\right]  _{\eta}%
\]
where%
\[
\left[  u_{x_{i}}\right]  _{\eta}:=\sup\left\{  \frac{\left\vert u_{x_{i}%
}(x)-u_{x_{i}}(y)\right\vert }{\left\vert x-y\right\vert ^{\eta}}%
:x,y\in\overline{\Omega}\text{ \ and \ }x\not =y\right\}
\]
for each $i\in\left\{  1,2,\ldots,N\right\}  .$

The set $\mathcal{S}_{p}$ that appears in the sequence is defined in (\ref{Sp}).

\begin{proposition}
\label{p1}If $u\in\mathcal{S}_{p},$ then
\begin{equation}
\left\vert u(x)\right\vert \leq\phi_{p}(x)\quad\forall\,x\in\overline{\Omega}.
\label{uinf}%
\end{equation}
Moreover, there exist positive constants $\eta_{p}$ and $K_{p},$ such that
$u\in C^{1,\eta_{p}}(\overline{\Omega})$ and
\[
\left\vert u\right\vert _{1,\eta_{p}}\leq K_{p}.
\]

\end{proposition}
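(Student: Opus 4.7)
The plan is to establish the two assertions separately: first the pointwise bound via comparison with the $p$-torsion function, then the uniform $C^{1,\eta_p}$-bound via a global Hölder regularity result for the $p$-Laplacian.

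For the pointwise bound, I would start from the defining property of $\mathcal{S}_p$: if $u\in\mathcal{S}_p$ then $-\Delta_p u=g$ in $\Omega$ for some $g\in L^\infty(\Omega)$ with $\|g\|_\infty=1$, so $-1\le g\le 1$ a.e. Since $-\Delta_p\phi_p=1\ge g=-\Delta_p u$ in $\Omega$ and $u=\phi_p=0$ on $\partial\Omega$, the weak comparison principle for the $p$-Laplacian (both $u$ and $\phi_p$ lie in $W_0^{1,p}(\Omega)$) yields $u\le\phi_p$ in $\overline{\Omega}$. For the lower bound I would apply the same reasoning to $-u$: noting that $-\Delta_p(-u)=-g$ because $\Delta_p$ is odd, and using $-g\le 1=-\Delta_p\phi_p$, comparison gives $-u\le\phi_p$, i.e. $u\ge -\phi_p$. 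Combining the two inequalities proves \eqref{uinf}.

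For the regularity assertion, I would invoke Lieberman's global $C^{1,\alpha}$ regularity theorem (the boundary version of the interior estimates of Tolksdorf/DiBenedetto), which applies precisely under the hypothesis $\partial\Omega\in C^{1,\tau}$ that is being assumed throughout the section. That result states: if $u\in W_0^{1,p}(\Omega)$ is a weak solution of $-\Delta_p u=g$ with $g\in L^\infty(\Omega)$, then $u\in C^{1,\eta}(\overline{\Omega})$ for some $\eta=\eta(p,N,\tau)\in(0,1)$, and there is a constant $C$ depending only on $p$, $N$, $\tau$, $\Omega$, $\|g\|_\infty$ and $\|u\|_\infty$ such that $|u|_{1,\eta}\le C$. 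In the situation at hand, $\|g\|_\infty=1$ is fixed across the whole class $\mathcal{S}_p$, and \eqref{uinf} together with the inclusion $\phi_p\in L^\infty(\Omega)$ gives the uniform bound $\|u\|_\infty\le\|\phi_p\|_\infty$. Consequently the Hölder exponent $\eta_p$ and the estimate $|u|_{1,\eta_p}\le K_p$ can be chosen independently of the particular $u\in\mathcal{S}_p$, depending only on $p$ (and the domain).

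The only mildly delicate point is making sure that the $L^\infty$-bound used in Lieberman's estimate is uniform over the whole class $\mathcal{S}_p$, but this is exactly what the comparison step in the first part provides. No further obstacle is expected; the statement is essentially a packaging of the comparison principle and the standard global regularity theory, both of which require only the regularity hypothesis $\partial\Omega\in C^{1,\tau}$ that is already in force.
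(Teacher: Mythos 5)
Your proposal is correct and follows essentially the same route as the paper: the bound \eqref{uinf} via the weak comparison principle applied to $u$ and $-u$ against the torsion function $\phi_{p}$, and the uniform $C^{1,\eta_{p}}$-estimate via Lieberman's global regularity theorem, using $\left\Vert g\right\Vert _{\infty}=1$ and the $L^{\infty}$-bound furnished by \eqref{uinf} (the paper merely adds the $p$-dependent but $u$-independent bound $\left\Vert \phi_{p}\right\Vert _{\infty}\leq M_{0}$ from \eqref{bt}) to make the constants uniform over $\mathcal{S}_{p}$.
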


\begin{proof}
Inequality (\ref{uinf}) follows directly from the comparison principle applied
to $u$ and $-u.$ The remaining assertions follow directly from the well-known
regularity result by Lieberman \cite[Theorem 1]{Lieb} applied to the
$p$-Laplacian by taking into account that (\ref{bt}) and (\ref{uinf}) yield
$\left\vert u\right\vert \leq M_{0}.$
\end{proof}

\begin{remark}
According to \cite[Theorem 1]{Lieb} the constants $\eta_{p}$ and $K_{p}$
depend only on $\tau,$ $N,$ $p,$ $M_{0}$ and $\Omega.$
\end{remark}

As for the constant $k_{p}$ defined in (\ref{kp}) we observe that%
\[
0<\left\Vert \nabla\phi_{p}\right\Vert _{\infty}\leq k_{p}\leq K_{p},\text{
\ whenever }p>1,
\]
where the latter inequality follows from Proposition \ref{p1}.

Using the well-known fact (see \cite{ARMA99}):%
\[
\left\Vert v\right\Vert _{\infty}\leq\left\Vert d_{\Omega}\right\Vert
_{\infty}\left\Vert \nabla v\right\Vert _{\infty}\quad\forall\,v\in
W^{1,\infty}(\Omega)\cap C_{0}(\overline{\Omega}),
\]
one obtains a lower bound to $k_{p}$ is in terms of $\left\Vert \phi
_{p}\right\Vert _{\infty}$ and $\left\Vert d_{\Omega}\right\Vert _{\infty}$:%

\begin{equation}
\frac{\left\Vert \phi_{p}\right\Vert _{\infty}}{\left\Vert d_{\Omega
}\right\Vert _{\infty}}\leq\left\Vert \nabla\phi_{p}\right\Vert _{\infty}\leq
k_{p},\text{ \ whenever }p>1. \label{k-}%
\end{equation}

\begin{corollary}
\label{p2}If $g\in L^{\infty}(\Omega)$ and $u\in W_{0}^{1,p}(\Omega)$ is the
only weak solution to
\[
\left\{
\begin{array}
[c]{rrll}%
-\Delta_{p}v & = & g & \text{in }\Omega\\
v & = & 0 & \text{on }\partial\Omega,
\end{array}
\right.
\]
then $u\in C^{1,\eta_{p}}(\overline{\Omega})$ and the following estimates hold%
\begin{equation}
\left\vert u(x)\right\vert \leq\left\Vert g\right\Vert _{\infty}^{1/(p-1)}%
\phi_{p}(x)\quad\forall\,x\in\overline{\Omega}, \label{uinfa}%
\end{equation}%
\begin{equation}
\left\vert u\right\vert _{1,\eta_{p}}\leq K_{p}\left\Vert g\right\Vert
_{\infty}^{1/(p-1)}, \label{grad1a}%
\end{equation}
and%
\begin{equation}
\left\Vert \nabla u\right\Vert _{\infty}\leq k_{p}\left\Vert g\right\Vert
_{\infty}^{1/(p-1)}. \label{grad2}%
\end{equation}

\end{corollary}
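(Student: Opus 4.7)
The plan is to reduce everything to Proposition \ref{p1} and the definition of $k_p$ via the homogeneity of the $p$-Laplacian. First I would dispose of the trivial case $\|g\|_\infty = 0$: then $u\equiv 0$ and all three inequalities hold vacuously (with the $C^{1,\eta_p}$ regularity being obvious). So assume $\|g\|_\infty > 0$.

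Next I would set $\kappa := \|g\|_\infty^{1/(p-1)} > 0$ and introduce the rescaled function $w := u/\kappa$. Using the scaling identity $-\Delta_p(\mu v) = \mu^{p-1}(-\Delta_p v)$ for any scalar $\mu>0$, I get
\[
-\Delta_p w \;=\; \kappa^{-(p-1)}(-\Delta_p u) \;=\; \frac{g}{\|g\|_\infty} \quad \text{in } \Omega,
\]
and $w=0$ on $\partial\Omega$. Since $\tilde g := g/\|g\|_\infty \in L^\infty(\Omega)$ with $\|\tilde g\|_\infty = 1$, the function $w$ belongs to the set $\mathcal{S}_p$ defined in \eqref{Sp}.

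Now I would simply invoke Proposition \ref{p1}, which gives $|w(x)|\leq \phi_p(x)$ for all $x\in\overline{\Omega}$, as well as $w\in C^{1,\eta_p}(\overline{\Omega})$ with $|w|_{1,\eta_p}\leq K_p$. Moreover, the very definition \eqref{kp} of $k_p$ yields $\|\nabla w\|_\infty \leq k_p$. Multiplying each of these three inequalities by $\kappa = \|g\|_\infty^{1/(p-1)}$ and using $u = \kappa w$ produces \eqref{uinfa}, \eqref{grad1a}, and \eqref{grad2}, respectively; the regularity $u\in C^{1,\eta_p}(\overline{\Omega})$ is inherited from $w$. There is no substantive obstacle here: the only thing to be careful about is recording the correct power $1/(p-1)$ arising from the $(p-1)$-homogeneity of $-\Delta_p$, and keeping track that the exponent $\eta_p$ and the constant $K_p$ furnished by Proposition \ref{p1} are universal on $\mathcal{S}_p$, so they apply uniformly to the normalized $w$ regardless of $g$.
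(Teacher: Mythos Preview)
Your proof is correct and follows essentially the same approach as the paper: dispose of the trivial case, rescale by $\|g\|_\infty^{1/(p-1)}$ so that the rescaled function lies in $\mathcal{S}_p$, and then read off the three conclusions from Proposition~\ref{p1} and the definition of $k_p$. There is nothing to add.
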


\begin{proof}
It suffices to consider $\left\Vert g\right\Vert _{\infty}\not =0.$ Let
\[
v:=\frac{u}{\left\Vert g\right\Vert _{\infty}^{1/(p-1)}}\text{ \ and
\ }\widetilde{g}:=\frac{g}{\left\Vert g\right\Vert _{\infty}}.
\]
As $u\in\mathcal{S}_{p}$ (note that $\left\Vert \widetilde{g}\right\Vert
_{\infty}=1$) the estimates (\ref{uinfa}) and (\ref{grad1a}) follow directly
from Proposition \ref{p1} applied to $v$, and (\ref{grad2}) follows from the
definition of $k_{p}.$
\end{proof}

\begin{lemma}
One has
\begin{equation}
1\leq\lambda_{p}^{1/(p-1)}\left\Vert \phi_{p}\right\Vert _{\infty}.
\label{lbep}%
\end{equation}

\end{lemma}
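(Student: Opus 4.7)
The plan is to use a comparison argument between the $L^\infty$-normalized eigenfunction $e_p$ and a suitable multiple of the $p$-torsion function $\phi_p$, exploiting the homogeneity of the $p$-Laplacian.

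First I would use the scaling property $-\Delta_p(c w) = c^{p-1}(-\Delta_p w)$ for any constant $c>0$ applied to $c := \lambda_p^{1/(p-1)}$ and $w := \phi_p$. Since $-\Delta_p \phi_p = 1$, this gives
\[
-\Delta_p\bigl(\lambda_p^{1/(p-1)}\phi_p\bigr) = \lambda_p \quad \text{in } \Omega.
\]
On the other hand, since $e_p$ is the $L^\infty$-normalized first eigenfunction, one has $0 \le e_p \le 1$ in $\Omega$, so that
\[
-\Delta_p e_p = \lambda_p e_p^{p-1} \le \lambda_p \quad \text{in } \Omega.
\]

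Combining these two relations, $-\Delta_p(\lambda_p^{1/(p-1)}\phi_p) \ge -\Delta_p e_p$ in $\Omega$, and both functions vanish on $\partial\Omega$. Applying the weak comparison principle for the $p$-Laplacian yields
\[
e_p(x) \le \lambda_p^{1/(p-1)}\phi_p(x) \quad \text{for all } x \in \overline{\Omega}.
\]
Taking the supremum over $\overline{\Omega}$ and using $\|e_p\|_\infty = 1$ gives exactly (\ref{lbep}).

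The argument is essentially a one-line consequence of comparison, so there is no real obstacle; the only thing to be careful about is the correct exponent in the scaling identity for the $p$-Laplacian, which dictates the choice $c = \lambda_p^{1/(p-1)}$ needed to match $\lambda_p e_p^{p-1}$ from above by $\lambda_p$. Alternatively, the same bound can be read off from the variational characterization of $\lambda_p$ by testing against $\phi_p$ and combining with the estimate $\|\phi_p\|_p^p \le \|\phi_p\|_\infty^{p-1}\int_\Omega \phi_p = \|\phi_p\|_\infty^{p-1}\|\nabla\phi_p\|_p^p$, but the comparison proof above is shorter and more transparent.
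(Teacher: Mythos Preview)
Your proof is correct and is essentially identical to the paper's own argument: compare $e_p$ with $\lambda_p^{1/(p-1)}\phi_p$ via the weak comparison principle for the $p$-Laplacian, using $-\Delta_p e_p = \lambda_p e_p^{p-1} \le \lambda_p = -\Delta_p(\lambda_p^{1/(p-1)}\phi_p)$, and then take the supremum.
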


\begin{proof}
As
\[
-\Delta_{p}e_{p}=\lambda_{p}e_{p}^{p-1}\leq\lambda_{p}=-\Delta_{p}(\lambda
_{p}^{1/(p-1)}\phi_{p})\text{ in }\Omega
\]
and $e_{p}=\phi_{p}=0$ on $\partial\Omega,$ it follows from the comparison
principle that%
\[
e_{p}\leq\lambda_{p}^{1/(p-1)}\phi_{p}\text{ in }\Omega.
\]
Hence, $1=\left\Vert e_{p}\right\Vert _{\infty}\leq\lambda_{p}^{1/(p-1)}%
\left\Vert \phi_{p}\right\Vert _{\infty}.$
\end{proof}

Now, we present our main existence result.

\begin{proof}
[Proof of Theorem \ref{sbSP}]We recall, from the definition of $\mathcal{E}%
(M)$ that
\begin{equation}
\lambda\frac{A_{p}}{M^{p-q}}+\beta\frac{B_{p}}{M^{p-(a+b)}}+m\frac{A_{p}%
}{M^{p-l}e^{-\alpha M^{s}}}\leq1. \label{M}%
\end{equation}
Let us consider the closed, convex and bounded subset $F\subset C^{1}%
(\overline{\Omega})$ defined by%
\begin{equation}
F:=\left\{  u\in C^{1}(\overline{\Omega}):\left(  \frac{\lambda}{\lambda_{p}%
}\right)  ^{1/(p-q)}e_{p}\leq u\leq\frac{M}{\left\Vert \phi_{p}\right\Vert
_{\infty}}\phi_{p}\text{ and \ }\left\Vert \nabla u\right\Vert _{\infty}%
\leq\frac{k_{p}M}{\left\Vert \phi_{p}\right\Vert _{\infty}}\right\}  .
\label{F}%
\end{equation}

Let $T:F\subset C^{1}(\overline{\Omega})\rightarrow C^{1}(\overline{\Omega})$
be the operator that assigns to each $u\in F$ the only function $T(u)\in
W_{0}^{1,p}(\Omega)$ satisfying%
\begin{equation}
\left\{
\begin{array}
[c]{rrll}%
-\Delta_{p}(T(u)) & = & \lambda(T(u))^{q-1}+\beta u^{a-1}\left\vert \nabla
u\right\vert ^{b}+mu^{l-1}e^{\alpha u^{s}} & \text{in }\Omega\\
T(u) & = & 0 & \text{on }\partial\Omega.
\end{array}
\right.  \label{Ua}%
\end{equation}
Thus, for each $u\in F$ the function $U=T(u)$ is the only weak solution in
$W_{0}^{1,p}(\Omega)$ to the Dirichlet problem%
\begin{equation}
\left\{
\begin{array}
[c]{rrll}%
-\Delta_{p}v & = & g(x,v) & \text{in }\Omega\\
v & = & 0 & \text{on }\partial\Omega,
\end{array}
\right.  \label{U}%
\end{equation}
where the nonlinearity $g(x,t)$ is defined from $u$ by the expression%
\[
g(x,t):=\lambda t^{q-1}+\beta u(x)^{a-1}\left\vert \nabla u(x)\right\vert
^{b}+mu(x)^{l-1}e^{\alpha u(x)^{s}},\text{ }x\in\overline{\Omega}\text{ \ and
}t\geq0.
\]

The uniqueness of $U$ follows from \cite{DS} as $g$ is sublinear in the
variable $t$ (recall that $p>q$).

Let us define
\[
\overline{u}:=\frac{M}{\left\Vert \phi_{p}\right\Vert _{\infty}}\phi_{p}\text{
\ and \ }\underline{u}:=\left(  \frac{\lambda}{\lambda_{p}}\right)
^{1/(p-q)}e_{p}.
\]
We are going to prove the existence of $U$ from the sub-super solution method
by showing that: $\overline{u}$ is a supersolution to \ref{U}, $\underline{u}$
is a subsolution to the same problem, and $\underline{u}\leq\overline{u}$ \ in
$\Omega.$

As\ $u\in F$ we have that
\[
0\leq u\leq\frac{M}{\left\Vert \phi_{p}\right\Vert _{\infty}}\phi_{p}\leq
M\text{ \ in }\Omega
\]
and
\[
0\leq\left\vert \nabla u\right\vert \leq\left\Vert \nabla u\right\Vert
_{\infty}\leq\frac{k_{p}M}{\left\Vert \phi_{p}\right\Vert _{\infty}}\text{
\ in }\Omega.
\]

Hence,
\[
0\leq u^{l-1}e^{\alpha u^{s}}\leq M^{l-1}e^{\alpha M^{s}}\text{ \ in }\Omega.
\]

Therefore, as
\[
0\leq\overline{u}:=\frac{M}{\left\Vert \phi_{p}\right\Vert _{\infty}}\phi
_{p}\leq M\text{ \ in }\Omega
\]
the above estimates and (\ref{M}) imply that
\begin{equation}
g(x,\overline{u}(x))\leq\left(  \frac{M}{\left\Vert \phi_{p}\right\Vert
_{\infty}}\right)  ^{p-1}\quad\forall\,x\in\Omega\label{sup1}%
\end{equation}
since
\begin{align*}
g(x,\overline{u}(x))  &  =\lambda\overline{u}(x)^{q-1}+\beta u(x)^{a-1}%
\left\vert \nabla u(x)\right\vert ^{b}+mu(x)^{l-1}e^{\alpha u(x)^{s}}\\
&  \leq\lambda M^{q-1}+\beta M^{a-1}\left(  \frac{k_{p}M}{\left\Vert \phi
_{p}\right\Vert _{\infty}}\right)  ^{b}+mM^{l-1}e^{\alpha M^{s}}\leq\left(
\frac{M}{\left\Vert \phi_{p}\right\Vert _{\infty}}\right)  ^{p-1}.
\end{align*}

As $-\Delta_{p}\phi_{p}=1$ in $\Omega,$ it follows from (\ref{sup1}) that
\[
-\Delta_{p}\overline{u}=\left(  \frac{M}{\left\Vert \phi_{p}\right\Vert
_{\infty}}\right)  ^{p-1}(-\Delta_{p}\phi_{p})=\left(  \frac{M}{\left\Vert
\phi_{p}\right\Vert _{\infty}}\right)  ^{p-1}\geq g(x,\overline{u})\text{ in
}\Omega.
\]
Thus, recalling that $\overline{u}=0$ on $\partial\Omega,$ we conclude that
$\overline{u}$ is a supersolution to (\ref{U}).

Using that $e_{p}^{p-q}\leq\left\Vert e_{p}\right\Vert _{\infty}^{p-q}\leq1$
in $\Omega,$ we have that
\begin{align*}
-\Delta_{p}\underline{u}  &  =\left(  \frac{\lambda}{\lambda_{p}}\right)
^{(p-1)/(p-q)}\left(  -\Delta_{p}e_{p}\right) \\
&  =\left(  \frac{\lambda}{\lambda_{p}}\right)  ^{(p-1)/(p-q)}\lambda_{p}%
e_{p}^{p-1}\\
&  =\left(  \frac{\lambda}{\lambda_{p}}\right)  \left(  \frac{\lambda}%
{\lambda_{p}}\right)  ^{(q-1)/(p-q)}\lambda_{p}e_{p}^{p-q}e_{p}^{q-1}\\
&  \leq\lambda\left(  \frac{\lambda}{\lambda_{p}}\right)  ^{(q-1)/(p-q)}%
e_{p}^{q-1}=\lambda\underline{u}^{q-1}\leq g(x,\underline{u})\text{ \ in
}\Omega.
\end{align*}
Hence, as $\underline{u}=0$ on $\partial\Omega$ we conclude that
$\underline{u}$ is a subsolution to (\ref{U}).

In order to prove that $\underline{u}\leq\overline{u}$ \ in $\Omega$ we first
observe from (\ref{ApBp}) and (\ref{M}) that%
\[
\lambda\frac{\left\Vert \phi_{p}\right\Vert _{\infty}^{p-1}}{M^{p-q}}\leq1,
\]
so that%
\[
\frac{M}{\left\Vert \phi_{p}\right\Vert _{\infty}}\geq\left(  \lambda
\left\Vert \phi_{p}\right\Vert _{\infty}^{q-1}\right)  ^{1/(p-q)}=\left(
\frac{\lambda}{\lambda_{p}}\right)  ^{1/(p-q)}\lambda_{p}^{1/(p-q)}\left\Vert
\phi_{p}\right\Vert _{\infty}^{(q-1)/(p-q)}.
\]

Hence, by using (\ref{lbep}) we obtain%
\begin{equation}
\left(  \frac{M}{\left\Vert \phi_{p}\right\Vert _{\infty}}\right)  ^{p-1}%
\geq\left(  \frac{\lambda}{\lambda_{p}}\right)  ^{(p-1)/(p-q)}\lambda_{p}
\label{aux8}%
\end{equation}
since
\[
\lambda_{p}^{1/(p-q)}\left\Vert \phi_{p}\right\Vert _{\infty}^{(q-1)/(p-q)}%
\geq\lambda_{p}^{1/(p-q)}\left(  \lambda_{p}^{-1/(p-1)}\right)  ^{(q-1)/(p-q)}%
=\lambda_{p}^{1/(p-1)}.
\]

It follows from (\ref{aux8}) that
\begin{align*}
-\Delta_{p}\underline{u}  &  =\left(  \frac{\lambda}{\lambda_{p}}\right)
^{(p-1)/(p-q)}\lambda_{p}e_{p}^{p-1}\\
&  \leq\left(  \frac{M}{\left\Vert \phi_{p}\right\Vert _{\infty}}\right)
^{p-1}e_{p}^{p-1}\leq\left(  \frac{M}{\left\Vert \phi_{p}\right\Vert _{\infty
}}\right)  ^{p-1}=-\Delta_{p}\overline{u}%
\end{align*}
and this implies that $\underline{u}\leq\overline{u}$ \ in $\Omega$ by the
comparison principle.

Therefore, we can apply the sub-super solution method to guarantee the
existence of a weak solution $U\in W_{0}^{1,p}(\Omega)$ to (\ref{U})
satisfying
\begin{equation}
\underline{u}=\left(  \frac{\lambda}{\lambda_{p}}\right)  ^{1/(p-q)}e_{p}\leq
U\leq\frac{M}{\left\Vert \phi_{p}\right\Vert _{\infty}}\phi_{p}=\overline
{u}\text{ \ in }\Omega. \label{F1}%
\end{equation}

As%
\[
\left\Vert g(x,U)\right\Vert _{\infty}\leq\lambda M^{q-1}+\beta M^{a-1}\left(
\frac{k_{p}M}{\left\Vert \phi_{p}\right\Vert _{\infty}}\right)  ^{b}%
+mM^{l-1}e^{\alpha M^{s}}\leq\left(  \frac{M}{\left\Vert \phi_{p}\right\Vert
_{\infty}}\right)  ^{p-1}%
\]
we note from Corollary \ref{p2} that $U=T(u)\in C^{1,\eta_{p}}(\overline
{\Omega}),$%
\[
\left\vert U\right\vert _{1,\eta_{p}}\leq K_{p}\frac{M}{\left\Vert \phi
_{p}\right\Vert _{\infty}}%
\]
and
\begin{equation}
\left\Vert \nabla U\right\Vert _{\infty}\leq k_{p}\frac{M}{\left\Vert \phi
_{p}\right\Vert _{\infty}}. \label{F2}%
\end{equation}

Combining (\ref{F1}) and (\ref{F2}) we conclude that $U=T(u)\in F,$ meaning
that $T(F)\subset F.$

Using the compactness of the embedding $C^{1,\eta_{p}}(\overline{\Omega
})\hookrightarrow C^{1}(\overline{\Omega})$ we can verify that $T:F\rightarrow
F$ is compact. Therefore, Schauder's fixed point theorem guarantees the
existence of a fixed point $u_{p}\in F.$ Consequently, $T(u_{p})=u_{p}=u$ in
(\ref{Ua}), so that
\[
\left\{
\begin{array}
[c]{rrll}%
-\Delta_{p}u_{p} & = & \lambda u_{p}^{q-1}+\beta u_{p}^{a-1}\left\vert \nabla
u_{p}\right\vert ^{b}+mu_{p}^{l-1}e^{\alpha u_{p}^{s}} & \text{in }\Omega\\
u_{p} & = & 0 & \text{on }\partial\Omega
\end{array}
\right.
\]
in the weak sense. In addition, as $u_{p}\in F$ the estimates (\ref{bounds}) hold.
\end{proof}

\begin{remark}
We have improved the lower bound in (\ref{F}) with respect to \cite{BE} (which
was also used in \cite{AM23}) since we have shown in (\ref{aux8}) that%
\[
\left(  \frac{\lambda}{\lambda_{p}}\right)  ^{1/(p-q)}=\min\left\{  \left(
\frac{\lambda}{\lambda_{p}}\right)  ^{1/(p-q)},\,\frac{M}{\left\Vert \phi
_{p}\right\Vert _{\infty}\lambda_{p}^{1/(p-1)}}\right\}  .
\]

\end{remark}

As a simple application of Theorem \ref{sbSP} we obtain the following
existence result.

\begin{corollary}
\label{up}Assume that $\partial\Omega\in C^{1,\tau}$ for some $\tau\in(0,1).$
Let $q,$ $a,$ $b,$ $l,$ $\lambda$ and $\beta$ be fixed, with $q,a,l\geq1,$
$b,\lambda,\beta>0,$ and $s,\alpha\geq0.$ For each
\[
p>\max\left\{  q,a+b,l\right\}
\]
there exists a positive constant $M_{p}$ satisfying
\begin{equation}
\lambda\frac{A_{p}}{M_{p}^{p-q}}+\beta\frac{B_{p}}{M_{p}^{p-(a+b)}}=\frac
{1}{2}. \label{Mp}%
\end{equation}
Moreover, if%
\begin{equation}
0<m\leq m_{p}:=\frac{M_{p}^{p-l}}{2A_{p}e^{\alpha M_{p}^{s}}}, \label{m<mp}%
\end{equation}
then the problem (\ref{P}) admits a weak solution $u_{p}\in W_{0}^{1,p}%
(\Omega)$ satisfying (\ref{bounds}).
\end{corollary}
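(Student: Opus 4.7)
The plan is to reduce the corollary directly to Theorem~\ref{sbSP} by producing an $M>0$ for which $(\lambda,\beta,m)\in\mathcal{E}(M)$, namely $M=M_{p}$.

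First I would establish the existence and uniqueness of $M_{p}$. Define, for fixed $p>\max\{q,a+b,l\}$, the function
\[
h(M):=\lambda\frac{A_{p}}{M^{p-q}}+\beta\frac{B_{p}}{M^{p-(a+b)}},\qquad M>0.
\]
Because $p-q>0$ and $p-(a+b)>0$, each summand is continuous and strictly decreasing in $M$, with $h(M)\to+\infty$ as $M\to 0^{+}$ and $h(M)\to 0$ as $M\to+\infty$. By the intermediate value theorem (and strict monotonicity) there exists a unique $M_{p}\in(0,\infty)$ with $h(M_{p})=1/2$, which is exactly (\ref{Mp}).

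Next I would verify the admissibility condition of Theorem~\ref{sbSP} at $M=M_{p}$. Assume $0<m\le m_{p}=M_{p}^{p-l}/(2A_{p}e^{\alpha M_{p}^{s}})$. Then
\[
\frac{mA_{p}}{M_{p}^{p-l}e^{-\alpha M_{p}^{s}}}
=\frac{mA_{p}e^{\alpha M_{p}^{s}}}{M_{p}^{p-l}}
\le\frac{1}{2},
\]
and combining this with (\ref{Mp}) gives
\[
\frac{\lambda A_{p}}{M_{p}^{p-q}}+\frac{\beta B_{p}}{M_{p}^{p-(a+b)}}+\frac{mA_{p}}{M_{p}^{p-l}e^{-\alpha M_{p}^{s}}}\le\frac{1}{2}+\frac{1}{2}=1,
\]
so that $(\lambda,\beta,m)\in\mathcal{E}(M_{p})$.

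Finally, I would invoke Theorem~\ref{sbSP} with $M=M_{p}$ to obtain a weak solution $u_{p}\in W_{0}^{1,p}(\Omega)$ of (\ref{P}) satisfying the bounds (\ref{bounds}). There is no serious obstacle here: the only thing to check is the elementary calculus fact that $h$ takes the value $1/2$ exactly once, and the rest is a direct substitution. The whole proof amounts to packaging Theorem~\ref{sbSP} by making the optimal choice of the free parameter $M$ that absorbs half of the budget into the $\lambda,\beta$ terms and leaves the other half to accommodate the exponential term as long as $m$ is not too large.
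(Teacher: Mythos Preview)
Your proof is correct and follows essentially the same route as the paper: define the auxiliary function $\varphi(t)=\lambda A_{p}t^{-(p-q)}+\beta B_{p}t^{-(p-(a+b))}$, use its limits at $0^{+}$ and $+\infty$ to find $M_{p}$ with $\varphi(M_{p})=1/2$, then check that the bound on $m$ makes the third term at most $1/2$ so that $(\lambda,\beta,m)\in\mathcal{E}(M_{p})$ and Theorem~\ref{sbSP} applies. Your additional observation that $M_{p}$ is unique (by strict monotonicity) is a harmless refinement beyond what the statement asks.
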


\begin{proof}
The hypotheses imply that the function
\[
\varphi(t):=\lambda\frac{A_{p}}{t^{p-q}}+\beta\frac{B_{p}}{t^{p-(a+b)}},\text{
\ }t>0,
\]
satisfies $\lim_{t\rightarrow0^{+}}\varphi(t)=+\infty$ and $\lim
_{t\rightarrow\infty}\varphi(t)=0^{+}.$ Consequently, there exists $M_{p}>0$
such that $\varphi(M_{p})=\frac{1}{2},$ which is (\ref{Mp}). Hence, as $m$
satisfies (\ref{m<mp}) we have%
\[
\frac{\lambda A_{p}}{M_{p}^{p-q}}+\frac{\beta B_{p}}{M_{p}^{p-(a+b)}}%
+\frac{mA_{p}}{M_{p}^{p-l}e^{-\alpha M^{s}}}\leq\frac{1}{2}+\frac{m_{p}A_{p}%
}{M_{p}^{p-l}e^{-\alpha M^{s}}}=1,
\]
so that $(\lambda,\beta,m)\in\mathcal{E}(M_{p}).$
\end{proof}

Now, we present some more applications of Theorem \ref{sbSP} that extends or
complements some recent results for problems involving exponential and
convection terms.

\begin{corollary}
\label{cor1}Assume that $\partial\Omega\in C^{1,\tau}$ for some $\tau
\in(0,1).$ Let $q,$ $a,$ $b,$ $l,$ $\alpha,$ $s\ $ and $m$ be fixed, with
$p>q\geq1,$ $m,b>0,$ $\alpha,s\geq0,$ $a\geq1,$ and
\[
l>p\geq a+b.
\]
There exists a positive constant $M_{p}$ such that if%
\begin{equation}
(\lambda,\beta)\in\mathcal{D}:=\left\{  (\lambda,\beta)\in\mathbb{R}_{+}%
^{2}:\lambda\frac{A_{p}}{M_{p}^{p-q}}+\beta\frac{B_{p}}{M_{p}^{p-(a+b)}}%
\leq\frac{1}{2}\right\}  , \label{D}%
\end{equation}
then the problem (\ref{P}) admits a weak solution $u_{p}\in W_{0}^{1,p}%
(\Omega)$ satisfying (\ref{bounds}).
\end{corollary}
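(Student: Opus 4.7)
The plan is to reduce Corollary \ref{cor1} to Theorem \ref{sbSP} by exhibiting a single $M_p > 0$ such that every $(\lambda,\beta) \in \mathcal{D}$ automatically yields a triple $(\lambda,\beta,m) \in \mathcal{E}(M_p)$. The key structural observation is that the roles of the parameters are reversed compared with Corollary \ref{up}: there, the hypothesis $p > l$ permitted taking $M_p$ large, which drove the exponential contribution down while demanding $m$ small; here the hypothesis $l > p$ instead forces taking $M_p$ small, which drives the exponential contribution down and, in exchange, squeezes $(\lambda,\beta)$ into $\mathcal{D}$.

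Concretely, since $l-p > 0$ and $\alpha,s \geq 0$, the function
$$t \longmapsto \frac{m A_p}{t^{p-l}\, e^{-\alpha t^s}} = m A_p\, t^{l-p}\, e^{\alpha t^s}$$
tends to $0$ as $t \to 0^{+}$ (the polynomial factor decays, while the exponential factor remains bounded near the origin). Hence one can fix $M_p > 0$ small enough that this expression, evaluated at $t = M_p$, is at most $\tfrac{1}{2}$.

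With this $M_p$ fixed, the defining inequality of $\mathcal{D}$ in (\ref{D}) ensures that the first two terms in the inequality defining $\mathcal{E}(M_p)$ sum to at most $\tfrac{1}{2}$. Adding the bound on the third (exponential) term yields a total at most $1$, so $(\lambda,\beta,m) \in \mathcal{E}(M_p)$ for every $(\lambda,\beta) \in \mathcal{D}$. Theorem \ref{sbSP} then immediately delivers the weak solution $u_p \in W_0^{1,p}(\Omega)$ together with the bounds (\ref{bounds}).

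The argument is essentially routine and parallels the proof of Corollary \ref{up} with the dual choice of $M_p$; there is no substantive obstacle. The only point deserving attention is that $\mathcal{D}$ may shrink dramatically when $M_p$ is forced to be very small (for instance when $m$, $\alpha$ or $s$ are large), but this is intrinsic: superlinear exponential growth with $l > p$ must be compensated by smallness of the sublinear parameters $\lambda$ and $\beta$.
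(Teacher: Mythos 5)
Your proposal is correct and takes essentially the same route as the paper: both split the defining inequality of $\mathcal{E}(M)$ into the $(\lambda,\beta)$-part and the term $m A_p t^{l-p}e^{\alpha t^{s}}$, and use $l>p$ (with $\alpha,s\geq 0$) to choose $M_p$ so that this term is at most $\tfrac{1}{2}$ at $t=M_p$, whence $(\lambda,\beta)\in\mathcal{D}$ gives $(\lambda,\beta,m)\in\mathcal{E}(M_p)$ and Theorem \ref{sbSP} applies. The only immaterial difference is that the paper fixes $M_p$ by the equation $m A_p M_p^{l-p}e^{\alpha M_p^{s}}=\tfrac{1}{2}$ (using strict monotonicity from $0$ to $\infty$), which selects the largest admissible $M_p$, whereas you merely take $M_p$ small enough that the inequality holds.
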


\begin{proof}
We can write the inequality (\ref{M}) as%
\begin{equation}
\varphi_{1}(M)+\varphi_{2}(M)\leq1 \label{1}%
\end{equation}
where
\[
\varphi_{1}(t):=\lambda\frac{A_{p}}{t^{p-q}}+\beta\frac{B_{p}}{t^{p-(a+b)}%
},\text{ \ }t>0
\]
and%
\[
\varphi_{2}(t):=mA_{p}t^{l-p}e^{\alpha t^{s}},\text{ \ }t>0.
\]

As $\varphi_{2}$ is strictly increasing and%
\[
\lim_{t\rightarrow0}\varphi_{2}(t)=0\text{ \ and \ }\lim_{t\rightarrow\infty
}\varphi_{2}(t)=\infty,
\]
there exists a unique $M_{p}>0$ such that%
\begin{equation}
\varphi_{2}(M_{p})=\frac{1}{2}. \label{1a}%
\end{equation}

Using such $M_{p}$ we define $\mathcal{D}$ in (\ref{D}) by the inequality
\begin{equation}
\varphi_{1}(M_{p})\leq\frac{1}{2}. \label{1b}%
\end{equation}
Thus, if $(\lambda,\beta)\in\mathcal{D}$ we obtain (\ref{M}) from (\ref{1}),
(\ref{1a}) and (\ref{1b}). The existence result follows then from Theorem
\ref{sbSP}.
\end{proof}

Geometrically, $\mathcal{D}$ is the region in the quadrant $\mathbb{R}_{+}%
^{2}$ of the $\lambda\beta$-plane that lies below the line%
\[
\lambda\frac{A_{p}}{M_{p}^{p-q}}+\beta B_{p}M_{p}^{a+b-p}=\frac{1}{2}.
\]

In \cite{AL}, de Araujo and Faria considered the Dirichlet problem \
\begin{equation}
\left\{
\begin{array}
[c]{rrll}%
-\Delta_{N}u & = & \gamma(a_{1}u^{r_{1}}+a_{2}\left\vert \nabla u\right\vert
^{r_{2}})+f(u) & \text{in }\Omega,\\
u & > & 0 & \text{in }\Omega,\\
u & = & 0 & \text{on }\partial\Omega
\end{array}
\right.  \label{P2}%
\end{equation}
where $0<r_{1},r_{2}<N-1,$ $a_{1}>0,$ $a_{2}\geq0,$ and $f:[0,\infty
)\rightarrow\mathbb{R}$ is a continuous function satisfying%
\[
0\leq f(t)\leq a_{3}t^{r_{3}}e^{\alpha t^{N/(N-1)}},\text{ \ where }%
a_{3},\alpha>0\text{ and }r_{3}>N-1.
\]
They used an approximation scheme to prove the existence of a weak solution
$u\in W_{0}^{1,N}(\Omega)$ to (\ref{P2}) whenever $\gamma\in(0,\gamma^{\ast
}),$ for some $\gamma^{\ast}>0.$

Note that (\ref{P2}) is a particular case of (\ref{P}) with $\lambda=\gamma
a_{1},$ \ $\beta=\gamma a_{2},$ \ $q=r_{1}+1,$ \ $a=1,$ \ $b=r_{2},$
\ $m=a_{3},$ \ $l=r_{3}+1,$ \ and $s=\frac{N}{N-1}.$

We remark that Corollary \ref{cor1} extends the result by de Araujo and Faria
in \cite{AL} (for the case $f(t)=a_{3}t^{r_{3}}e^{\alpha t^{N/(N-1)}}$) since
it admits $p\not =N$ and also allows the convection term to be multiplied by a
power of the solution.

The following corollary further extends the result of de Araujo and Faria (for
the case $f(t)=a_{3}t^{r_{3}}e^{\alpha t^{N/(N-1)}}$) by admitting $a+b>p$ (in
(\ref{P2}) this means that $r_{2}>N-1$).

\begin{corollary}
\label{cor2}Assume that $\partial\Omega\in C^{1,\tau}$ for some $\tau
\in(0,1).$ Let $q,$ $a,$ $b,$ $l,$ $\alpha,$ $s,$ $\beta$ and $m$ be fixed,
with $a,l\geq1,$ $\alpha,s\geq0$ and $\beta,m>0.$ Suppose that%
\[
1\leq q<p<\min\left\{  a+b,l\right\}  .
\]
There exists a positive constant $M_{p}$ such that if
\[
0<\lambda\leq\lambda^{\ast}:=\frac{M_{p}^{p-q}}{2A_{p}}%
\]
then (\ref{P}) admits a weak solution $u_{p}\in W_{0}^{1,p}(\Omega)$
satisfying (\ref{bounds}).
\end{corollary}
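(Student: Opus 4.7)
The plan is to mirror the strategy used in Corollary \ref{cor1}, but now splitting off the $\lambda$-dependent term from the other two. As in (\ref{M}), the membership $(\lambda,\beta,m)\in\mathcal{E}(M)$ amounts to the inequality
\[
\underbrace{\lambda\frac{A_{p}}{M^{p-q}}}_{=:\varphi_{1}(M)}+\underbrace{\beta\frac{B_{p}}{M^{p-(a+b)}}+\frac{mA_{p}}{M^{p-l}e^{-\alpha M^{s}}}}_{=:\psi(M)}\leq 1.
\]
Under the assumption $p<\min\{a+b,l\}$, both exponents $(a+b)-p$ and $l-p$ are strictly positive, while $q-p<0$. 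In particular, $\varphi_{1}$ is strictly decreasing in $M$ and $\psi$ is continuous and strictly increasing in $M$.

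I would then observe that $\psi$ depends only on the quantities that are fixed ($\beta$, $m$, $a$, $b$, $l$, $\alpha$, $s$, and of course $p$), not on $\lambda$. Since $\beta,m>0$, one has $\lim_{M\to 0^{+}}\psi(M)=0$ and $\lim_{M\to\infty}\psi(M)=+\infty$ (the second limit comes from either of the two terms: $M^{(a+b)-p}\to\infty$ works by itself, and the exponential factor in the third term makes it go to infinity even faster). Hence by the intermediate value theorem there exists a unique $M_{p}>0$ such that
\[
\psi(M_{p})=\frac{1}{2}.
\]

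With this $M_{p}$ in hand, the requirement $\varphi_{1}(M_{p})+\psi(M_{p})\leq 1$ reduces to $\varphi_{1}(M_{p})\leq\tfrac{1}{2}$, i.e.
\[
\lambda\frac{A_{p}}{M_{p}^{p-q}}\leq\frac{1}{2}\quad\Longleftrightarrow\quad \lambda\leq\frac{M_{p}^{p-q}}{2A_{p}}=\lambda^{\ast}.
\]
Therefore, whenever $0<\lambda\leq\lambda^{\ast}$, the triple $(\lambda,\beta,m)$ lies in $\mathcal{E}(M_{p})$, and Theorem \ref{sbSP} directly provides the desired weak solution $u_{p}\in W_{0}^{1,p}(\Omega)$ satisfying (\ref{bounds}).

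There is no genuine obstacle: the entire argument is a routine adaptation of Corollary \ref{cor1}, with the roles of the parameters reshuffled to exploit the new sign pattern on the exponents. The only point that deserves a careful line is confirming monotonicity and the two limits of $\psi$, which follow immediately from $(a+b)-p>0$, $l-p>0$, and $\beta,m>0$; once $M_{p}$ is produced, the rest reduces to the already-proven Theorem \ref{sbSP}.
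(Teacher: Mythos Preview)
Your argument is correct and essentially identical to the paper's own proof: the same split $\varphi_{1}(t)=\lambda A_{p}t^{q-p}$ versus $\varphi_{2}(t)=\beta B_{p}t^{(a+b)-p}+mA_{p}t^{l-p}e^{\alpha t^{s}}$, the same choice of $M_{p}$ via $\varphi_{2}(M_{p})=\tfrac{1}{2}$, and the same reduction to Theorem~\ref{sbSP}. The only cosmetic difference is that you note uniqueness of $M_{p}$ (from strict monotonicity of $\varphi_{2}$), which the paper does not bother to record.
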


\begin{proof}
Now, we write the inequality (\ref{M}) as%
\[
\varphi_{1}(M)+\varphi_{2}(M)\leq1
\]
where
\[
\varphi_{1}(t):=\lambda\frac{A_{p}}{t^{p-q}},\text{ \ }t>0
\]
and%
\[
\varphi_{2}(t):=\beta B_{p}t^{(a+b)-p}+mA_{p}t^{l-p}e^{\alpha t^{s}},\text{
\ }t>0.
\]

As $\varphi_{2}$ is strictly increasing and%
\[
\lim_{t\rightarrow0}\varphi_{2}(t)=0\text{ \ and \ }\lim_{t\rightarrow\infty
}\varphi_{2}(t)=\infty,
\]
there exists $M_{p}>0$ such that
\[
\varphi_{2}(M_{p})=\frac{1}{2}.
\]

Thus, if $\lambda\leq\lambda^{\ast}$ then $\varphi_{1}(M_{p})\leq\frac{1}{2}$
and (\ref{M}) holds. Consequently, we can apply Theorem \ref{sbSP} to arrive
at the desired result.
\end{proof}

In the notation of (\ref{P2}) we have
\[
\gamma\leq\gamma^{\ast}:=\frac{M_{p}^{p-q}}{2a_{1}A_{p}}%
\]
where $M$ is defined by the equation%
\[
\gamma a_{2}B_{p}M_{p}^{(a+b)-p}+mA_{p}M_{p}^{l-p}e^{\alpha M^{s}}=\frac{1}%
{2}.
\]

Proceeding as in the two previous proofs we obtain the following result.

\begin{corollary}
\label{cor3}Assume that $\partial\Omega\in C^{1,\tau}$ for some $\tau
\in(0,1).$ Let $q,$ $a,$ $b,$ $l,$ $\alpha,$ $s\ $ and $\beta$ be fixed, with
$b,\beta>0,$ $a\geq1,$ $\alpha,s\geq0.$ Suppose that
\[
1\leq q<l=p<a+b.
\]
If
\[
M_{p}:=\left(  \frac{1}{2\beta B_{p}}\right)  ^{\frac{1}{(a+b)-p}}%
\]
and
\[
(\lambda,m)\in\mathcal{D}:=\left\{  (\lambda,m)\in\mathbb{R}_{+}%
\times\mathbb{R}_{+}:\lambda\frac{A_{p}}{M_{p}^{p-q}}+mA_{p}e^{\alpha
M_{p}^{s}}\leq\frac{1}{2}\right\}  ,
\]
then (\ref{P}) admits a weak solution $u_{p}\in W_{0}^{1,p}(\Omega)$
satisfying (\ref{bounds}).
\end{corollary}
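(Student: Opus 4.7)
The plan is to reduce the statement directly to Theorem \ref{sbSP} by choosing the free parameter $M$ there to be exactly the constant $M_p$ defined in the corollary. First I would verify that the definition of $M_p$ makes sense: the hypothesis $p<a+b$ guarantees that the exponent $(a+b)-p$ is strictly positive, so $M_p=(2\beta B_p)^{-1/((a+b)-p)}$ is a well-defined positive real number (note $\beta,B_p>0$).

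Next I would rewrite the defining inequality of $\mathcal{E}(M_p)$, namely
\[
\frac{\lambda A_p}{M_p^{p-q}}+\frac{\beta B_p}{M_p^{p-(a+b)}}+\frac{mA_p}{M_p^{p-l}e^{-\alpha M_p^s}}\leq 1,
\]
using the two structural simplifications available in this corollary. Since $l=p$, the last term collapses to $mA_p e^{\alpha M_p^s}$. And the very definition of $M_p$ yields $\beta B_p M_p^{(a+b)-p}=\tfrac{1}{2}$, so the middle term equals exactly $\tfrac{1}{2}$. Therefore the membership $(\lambda,\beta,m)\in\mathcal{E}(M_p)$ is equivalent to
\[
\lambda\frac{A_p}{M_p^{p-q}}+mA_p e^{\alpha M_p^s}\leq\tfrac{1}{2},
\]
which is precisely the condition $(\lambda,m)\in\mathcal{D}$ in the statement.

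With this in hand the rest is immediate: Theorem \ref{sbSP} applies with $M=M_p$ and produces a weak solution $u_p\in W_0^{1,p}(\Omega)$ to (\ref{P}) satisfying the bounds (\ref{bounds}). There is no real obstacle here; the work is purely algebraic bookkeeping, and the only condition that needs separate checking is the positivity of the exponent $(a+b)-p$, which is ensured by the standing assumption $p<a+b$. The roles of the hypotheses $q<p$ and $q\geq 1$ are inherited from Theorem \ref{sbSP}, as is the $C^{1,\tau}$ regularity of $\partial\Omega$.
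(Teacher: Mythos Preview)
Your proposal is correct and follows exactly the approach the paper indicates (the paper simply writes ``Proceeding as in the two previous proofs''): you choose $M=M_p$ so that the convection term in the defining inequality of $\mathcal{E}(M_p)$ equals $\tfrac{1}{2}$, use $l=p$ to collapse the exponential term, and then the condition $(\lambda,m)\in\mathcal{D}$ is precisely the remaining inequality, after which Theorem~\ref{sbSP} applies directly.
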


\section{A nonexistence result\label{none}}

In \cite{Peral} Garcia Azorero and Peral Alonso proved in Theorem 2.1 that the
problem
\begin{equation}
\left\{
\begin{array}
[c]{rrll}%
-\Delta_{p}u & = & me^{u} & \text{in }\Omega\\
u & = & 0 & \text{on }\partial\Omega,
\end{array}
\right.  \label{lane1}%
\end{equation}
does not have a solution if
\begin{equation}
m>\max\left\{  \lambda_{p},\lambda_{p}\left(  \frac{p-1}{e}\right)
^{p-1}\right\}  . \label{m>}%
\end{equation}

In this section, we extends the nonexistence result by Garcia Azorero and
Peral Alonso for the more general equation%
\[
-\Delta_{p}u=mu^{l-1}e^{\alpha u^{s}}+g(x)\text{ in }\Omega.
\]

The following lemma was proved by Allegretto and Huang (see \cite[Theorem
2.4]{AH}) as a consequence of Picone's identity.

\begin{lemma}
\label{picone}Let $h\in L^{\infty}(\Omega)$ be a nonnegative function. The
Dirichlet problem%
\[
\left\{
\begin{array}
[c]{lll}%
-\Delta_{p}u=\lambda_{p}\left\vert u\right\vert ^{p-2}u+h(x) & \text{in} &
\Omega\\
u\geq0 & \text{on} & \partial\Omega
\end{array}
\right.
\]
has a weak solution if and only if $h\equiv0$ in $\Omega$ and $u=0$ on
$\partial\Omega.$ In this case, the solution is a multiple of $e_{p}.$
\end{lemma}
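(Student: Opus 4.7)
The plan is to deduce both the vanishing of $h$ and the structure of $u$ from Picone's identity together with the variational characterization of $\lambda_{p}$. I would proceed in two stages: first extracting the sign of $u$ inside $\Omega$, and then applying Picone in the positive case.

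In the first stage, since $u \geq 0$ on $\partial\Omega$, the negative part $u^{-}$ lies in $W_{0}^{1,p}(\Omega)$, and testing the equation against $u^{-}$ yields
\[
\int_{\Omega} |\nabla u^{-}|^{p}\,dx \;+\; \int_{\Omega} h\,u^{-}\,dx \;=\; \lambda_{p} \int_{\Omega} (u^{-})^{p}\,dx.
\]
Combined with the Rayleigh-quotient bound $\int |\nabla u^{-}|^{p} \geq \lambda_{p} \int (u^{-})^{p}$, this forces $\int h\,u^{-}\,dx = 0$ and equality in Poincar\'e, hence either $u^{-} \equiv 0$ (so $u \geq 0$ in $\Omega$) or $u^{-} = c\,e_{p}$ with $c > 0$, in which case $u = -c\,e_{p}$, $h \equiv 0$, and the conclusion holds directly. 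In the first alternative, the strong maximum principle applied to $-\Delta_{p} u = \lambda_{p} u^{p-1} + h \geq 0$ gives either $u \equiv 0$ (forcing $h \equiv 0$ from the equation) or $u > 0$ throughout $\Omega$.

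In the second stage, assuming $u > 0$ in $\Omega$, I would use $\varphi_{\epsilon} := e_{p}^{p}/(u+\epsilon)^{p-1}$ as a test function for $\epsilon > 0$; this lies in $W_{0}^{1,p}(\Omega)$ because $e_{p}$ vanishes on $\partial\Omega$ and $u + \epsilon \geq \epsilon$. Pointwise Picone applied to $e_{p}$ and $u + \epsilon$ gives $|\nabla e_{p}|^{p} \geq |\nabla u|^{p-2}\nabla u \cdot \nabla \varphi_{\epsilon}$, so integrating and substituting the weak formulation of the equation for $u$ yields
\[
\lambda_{p} \int_{\Omega} e_{p}^{p}\,dx \;\geq\; \int_{\Omega} \bigl(\lambda_{p} u^{p-1} + h\bigr)\frac{e_{p}^{p}}{(u+\epsilon)^{p-1}}\,dx.
\]
Sending $\epsilon \to 0^{+}$ by monotone convergence, the right-hand side increases to $\lambda_{p}\int e_{p}^{p} + \int h\,e_{p}^{p}/u^{p-1}\,dx$, forcing $\int h\,e_{p}^{p}/u^{p-1}\,dx \leq 0$ and hence $h \equiv 0$. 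With $h \equiv 0$ the inequality becomes an equality in the limit, and the equality case of Picone then yields $u = c\,e_{p}$; in particular $u = 0$ on $\partial\Omega$. The reverse implication is immediate, since any multiple of $e_{p}$ solves the eigenvalue problem with $h = 0$.

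The main technical obstacle is handling the $\epsilon \to 0^{+}$ limit. The passage to the limit in the integral is routine via monotone convergence (the integrand increases as $\epsilon \downarrow 0$), but extracting the equality case of Picone in the limiting regime is the delicate point; I would handle it by observing that once $h \equiv 0$ is known, the chain of inequalities collapses to equalities, so the nonnegative pointwise Picone integrand must vanish almost everywhere on $\{e_{p} > 0\} = \Omega$, which forces proportionality of $u$ and $e_{p}$.
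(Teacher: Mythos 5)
Your proposal is correct, but note first that the paper itself contains no proof of this lemma: it is quoted from Allegretto--Huang \cite[Theorem 2.4]{AH}, and the paper only records that it follows ``as a consequence of Picone's identity.'' What you have written is essentially a reconstruction of that cited argument. Your preliminary sign analysis (testing with $u^{-}\in W_{0}^{1,p}(\Omega)$, equality in the Rayleigh quotient plus simplicity of $\lambda_{p}$, then the strong maximum principle to split into $u\equiv 0$ or $u>0$) is accurate and genuinely needed, since the boundary condition is only $u\geq 0$; both degenerate alternatives ($u=-c\,e_{p}$ and $u\equiv 0$) are correctly closed out, and the test function $e_{p}^{p}/(u+\epsilon)^{p-1}$ together with monotone convergence correctly forces $\int_{\Omega}h\,e_{p}^{p}/u^{p-1}\,dx\leq 0$ and hence $h\equiv 0$.

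The one step you should tighten is the equality case at the end. For fixed $\epsilon>0$ the chain does \emph{not} collapse to equalities: the nonnegative Picone deficit $R_{\epsilon}:=|\nabla e_{p}|^{p}-|\nabla u|^{p-2}\nabla u\cdot\nabla\bigl(e_{p}^{p}/(u+\epsilon)^{p-1}\bigr)$ need not vanish pointwise for any fixed $\epsilon$; what the argument gives, once $h\equiv 0$ is known, is only $\int_{\Omega}R_{\epsilon}\,dx\to 0$ as $\epsilon\to 0^{+}$. So ``the pointwise Picone integrand vanishes a.e.'' cannot be read off directly; you need a limiting device. Since $u>0$ in $\Omega$, one has $R_{\epsilon}\to R_{0}$ a.e., where $R_{0}$ is the (nonnegative) deficit for the pair $(e_{p},u)$, and Fatou's lemma yields $\int_{\Omega}R_{0}\,dx\leq\liminf_{\epsilon\to 0^{+}}\int_{\Omega}R_{\epsilon}\,dx=0$, hence $R_{0}=0$ a.e.; the equality case of Young's inequality inside Picone then gives $\nabla e_{p}=(e_{p}/u)\nabla u$ a.e., i.e. $\nabla(e_{p}/u)=0$, and connectedness of $\Omega$ gives $u=c\,e_{p}$, so in particular $u=0$ on $\partial\Omega$. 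This is a one-line repair rather than a flaw in strategy; with it, and granting the standard background facts you implicitly use (simplicity of $\lambda_{p}$ and the local boundedness/$C^{1,\alpha}_{\mathrm{loc}}$ regularity of $u$ needed to invoke the strong maximum principle), your proof is complete and matches the cited source.
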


\begin{proposition}
\label{bm}Suppose that $u_{m}\in W_{0}^{1.p}(\Omega)$ is a positive weak
solution to the Dirichlet problem
\begin{equation}
\left\{
\begin{array}
[c]{rrll}%
-\Delta_{p}u & = & mu^{l-1}e^{\alpha u^{s}}+g(x) & \text{in }\Omega\\
u & = & 0 & \text{on }\partial\Omega,
\end{array}
\right.  \label{lane}%
\end{equation}
where $g\in L^{\infty}(\Omega),$ $g\geq0,$ $1\leq l<p$ and $\alpha,s>0.$ Then%
\begin{equation}
m<\lambda_{p}\left(  \frac{p-l}{\alpha se}\right)  ^{\frac{p-l}{s}}.
\label{mcp}%
\end{equation}

\end{proposition}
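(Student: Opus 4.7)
The plan is to argue by contradiction, bringing the equation (\ref{lane}) into the framework of Lemma \ref{picone}. The idea is that if $m$ exceeds the claimed threshold, one can absorb a copy of $\lambda_{p}u_{m}^{p-1}$ into the exponential nonlinearity on the right-hand side, leaving a nonnegative residual $h(x)$; Lemma \ref{picone} will then force $h\equiv 0$ and $u_{m}$ proportional to $e_{p}$, and the latter will clash with the boundary behaviour of $u_{m}$.

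First I would carry out the elementary maximization of $F(t):=t^{p-l}e^{-\alpha t^{s}}$ on $(0,\infty)$. Since $p-l>0$ and $\alpha,s>0$, the equation $\tfrac{F'(t)}{F(t)}=\tfrac{p-l}{t}-\alpha s\, t^{s-1}=0$ has the unique root $t_{0}=\left(\tfrac{p-l}{\alpha s}\right)^{1/s}$, and a short computation gives
\begin{equation*}
\max_{t>0}F(t)=F(t_{0})=\left(\frac{p-l}{\alpha s e}\right)^{\frac{p-l}{s}}.
\end{equation*}
Equivalently, $\lambda_{p}t^{p-1}\leq \lambda_{p}F(t_{0})\, t^{l-1}e^{\alpha t^{s}}$ for every $t\geq 0$. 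Assuming for contradiction that $m\geq\lambda_{p}F(t_{0})$, this rearranges to the pointwise estimate $m\, t^{l-1}e^{\alpha t^{s}}\geq\lambda_{p}t^{p-1}$ valid for all $t\geq 0$.

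Evaluating this inequality at $t=u_{m}(x)$ and adding $g(x)\geq 0$ to both sides, I would recast (\ref{lane}) in the form
\begin{equation*}
-\Delta_{p}u_{m}=\lambda_{p}u_{m}^{p-1}+h(x)\quad\text{in }\Omega,\qquad u_{m}=0\text{ on }\partial\Omega,
\end{equation*}
where $h(x):=m\,u_{m}(x)^{l-1}e^{\alpha u_{m}(x)^{s}}+g(x)-\lambda_{p}u_{m}(x)^{p-1}\geq g(x)\geq 0$. Once $u_{m}\in L^{\infty}(\Omega)$ is known, so that $h\in L^{\infty}(\Omega)$, Lemma \ref{picone} applies and forces $h\equiv 0$ together with $u_{m}=ce_{p}$ for some $c>0$. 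In particular $g\equiv 0$ in $\Omega$ and $m\,u_{m}(x)^{l-p}e^{\alpha u_{m}(x)^{s}}=\lambda_{p}$ a.e.\ in $\Omega$.

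To close the argument I would exploit the boundary behaviour of $e_{p}$: since $u_{m}=ce_{p}$ is continuous with $u_{m}=0$ on $\partial\Omega$ and $u_{m}>0$ in $\Omega$, its range is the whole interval $[0,c]$; yet, because $l<p$, the map $t\mapsto t^{l-p}e^{\alpha t^{s}}$ diverges to $+\infty$ as $t\to 0^{+}$, so it cannot equal the finite constant $\lambda_{p}/m$ on any set whose closure meets the origin. This contradiction delivers the strict inequality (\ref{mcp}). The main obstacle, and the only genuinely non-routine point, is securing $u_{m}\in L^{\infty}(\Omega)$ so that Lemma \ref{picone} is applicable; this should follow from standard boundedness results for positive weak solutions of $p$-Laplacian problems with nonnegative data, given that a weak formulation of (\ref{lane}) already implicitly requires integrability of the exponential term.
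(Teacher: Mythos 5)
Your argument is correct and takes essentially the same route as the paper: the same elementary optimization (you maximize $F(t)=t^{p-l}e^{-\alpha t^{s}}$ where the paper minimizes its reciprocal $Q(t)=me^{\alpha t^{s}}/t^{p-l}$, with the same critical point and value), the same rewriting $-\Delta_{p}u_{m}=\lambda_{p}u_{m}^{p-1}+h$ with $h\geq0$ under the contradiction hypothesis $m\geq\lambda_{p}\left(\frac{p-l}{\alpha se}\right)^{\frac{p-l}{s}}$, and the same appeal to Lemma \ref{picone}. Your endgame---using the continuity of $u_{m}=ce_{p}$ and the blow-up of $t\mapsto t^{l-p}e^{\alpha t^{s}}$ as $t\to0^{+}$ near $\partial\Omega$---merely unifies the paper's two-case split (where $mC_{p}>\lambda_{p}$ forces $u_{m}\equiv0$ and $mC_{p}=\lambda_{p}$ forces $g\equiv0$ and $u_{m}\equiv t_{m}$), and the boundedness of $u_{m}$ that you flag as needed for $h\in L^{\infty}(\Omega)$ is likewise left implicit in the paper's own proof.
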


\begin{proof}
Let us consider the strictly positive function%
\[
Q(t):=m\frac{e^{\alpha t^{s}}}{t^{p-l}},\text{ \ }t>0.
\]

A simple calculation shows that the only critical point of $Q$ is
\[
t_{m}=\left(  \frac{p-l}{\alpha s}\right)  ^{1/s}.
\]

As%
\[
\lim_{t\rightarrow0^{+}}Q(t)=\lim_{t\rightarrow+\infty}Q(t)=+\infty,
\]
we have that $t_{m}$ is the only global minimum point. Thus,
\[
Q(t)>Q(t_{m})=mC_{p}\quad\forall\,t\not =t_{m},
\]
where%
\[
C_{p}:=\left(  \frac{\alpha se}{p-l}\right)  ^{\frac{p-l}{s}}.
\]

It follows that%
\begin{equation}
mu_{m}^{l-1}e^{\alpha u_{m}^{s}}=Q(u_{m})u_{m}^{p-1}\geq Q(t_{m})u_{m}%
^{p-1}=mC_{p}u_{m}^{p-1}\text{ in }\Omega\label{meq2}%
\end{equation}
with the equality occurring only if either $u_{m}=t_{m}$ or $u_{m}=0.$

Now, we observe that
\begin{equation}
-\Delta_{p}u_{m}=\lambda_{p}u_{m}^{p-1}+h\text{ in }\Omega\label{meq}%
\end{equation}
where%
\[
h:=mu_{m}^{l-1}e^{\alpha u_{m}^{s}}+g-\lambda_{p}u_{m}^{p-1}\geq mu_{m}%
^{l-1}e^{\alpha u_{m}^{s}}-\lambda_{p}u_{m}^{p-1}\geq(mC_{p}-\lambda_{p}%
)u_{m}^{p-1}.
\]

We are going to show that
\[
mC_{p}-\lambda_{p}<0
\]
which is (\ref{mcp}). Let us suppose, by contradiction, that
\[
mC_{p}\geq\lambda_{p}.
\]
Owing to (\ref{meq}) and Lemma \ref{picone} this implies that $h\equiv0$ a.e.
in $\Omega.$ Thus,%
\begin{equation}
0\leq(mC_{p}-\lambda_{p})u_{m}^{p-1}\leq mu_{m}^{l-1}e^{\alpha u_{m}^{s}%
}+g-\lambda_{p}u_{m}^{p-1}=0. \label{meq1}%
\end{equation}

Hence, if $mC_{p}>\lambda_{p}$ then (\ref{meq1}) leads to the absurd
\[
u_{m}=0\text{ a.e. in }\Omega,
\]
and if $mC_{p}=\lambda_{p}$, then (\ref{meq2}) and (\ref{meq1}) yield%
\[
g=\lambda_{p}u_{m}^{p-1}-mu_{m}^{l-1}e^{\alpha u_{m}^{s}}\leq0.
\]
This implies that $g=0$ and leads to the equality in (\ref{meq2}) which is
absurd, for it means that
\[
u_{m}=t_{m}\text{ a.e. in }\Omega.
\]

\end{proof}

We remark that (\ref{mcp}) improves the estimate (\ref{m>}) when $1<p\leq1+e.$
In fact, in this case, (\ref{lane1}) also has no solution if
\[
\lambda_{p}\left(  \frac{p-1}{e}\right)  ^{p-1}\leq m\leq\lambda_{p}.
\]

\section{Asymptotic behavior\label{sec2}}

In this section we assume a stronger assumption on the regularity of $\Omega$:
either $\partial\Omega\in C^{1,1}$ or $\Omega$ convex and $\partial\Omega\in
C^{1,\tau}$ ($\tau$ as before).

Our goal is to prove the uniform convergence of $u_{p}$ to $d_{\Omega},$ as
$p\rightarrow\infty,$ where $u_{p}\in W_{0}^{1,p}(\Omega)$ is the solution to
(\ref{P}) given by Corollary \ref{up}. Thus, we consider: $p>\max\left\{
q,a+b,l\right\}  ,$ the positive parameters $\lambda$ and $\beta$ arbitrary,
and the parameter $m$ restricted to the interval $(0,m_{p}].$ We recall that
\[
m_{p}:=\frac{M_{p}^{p-l}}{2A_{p}e^{\alpha M_{p}^{s}}},
\]
$M_{p}$ is defined by (\ref{Mp}), and%
\begin{equation}
\left(  \frac{\lambda}{\lambda_{p}}\right)  ^{1/(p-q)}e_{p}\leq u_{p}\leq
\frac{M_{p}}{\left\Vert \phi_{p}\right\Vert _{\infty}}\phi_{p}\text{ \ and
\ }\left\vert \nabla u_{p}\right\vert \leq\frac{k_{p}M_{p}}{\left\Vert
\phi_{p}\right\Vert _{\infty}}. \label{boundsup}%
\end{equation}

To achieve our goal we will make use of the explicit gradient estimates
derived by Ercole in \cite{GE}, They are based on the results by Cianchi and
Maz'ya in \cite{CM11} for a class of operators that includes the $p$-Laplacian
as a very particular case.

We recall that the Lorentz space $L^{\theta,1}(\Omega)$ consists of all
measurable functions $v:\Omega\rightarrow\mathbb{R}$ such that%
\[
\int_{0}^{\left\vert \Omega\right\vert }s^{-1/\theta^{\prime}}\left\vert
v^{\ast}(s)\right\vert \mathrm{d}s<\infty.
\]
Here, $\theta^{\prime}=\frac{\theta}{\theta-1}$ and $v^{\ast}:[0,\infty
)\rightarrow\lbrack0,\infty]$ stands for the decreasing rearrangement of $v$,
which is defined as%
\[
v^{\ast}(s):=\left\{
\begin{array}
[c]{lll}%
\sup\left\{  t\geq0:\mu_{v}(t)>s\right\}  & \text{if} & 0\leq s\leq\left\vert
\Omega\right\vert \\
0 & \text{if} & s>\left\vert \Omega\right\vert ,
\end{array}
\right.
\]
where
\[
\mu_{v}(t):=\left\vert \left\{  x\in\Omega:v(x)>t\right\}  \right\vert ,\quad
t\geq0,
\]
is the distribution function of $v.$

As it is well known, $L^{\theta,1}(\Omega)$ is a Banach space endowed with the
norm
\[
\left\Vert v\right\Vert _{\theta,1}:=\int_{0}^{\left\vert \Omega\right\vert
}\left\vert v^{\ast\ast}(s)\right\vert s^{-1/\theta^{\prime}}\mathrm{d}s
\]
where $v^{\ast\ast}:(0,\infty)\rightarrow\lbrack0,\infty)$ is defined as%
\[
v^{\ast\ast}(s):=\frac{1}{s}\int_{0}^{s}v^{\ast}(r)\mathrm{d}r,\quad s>0.
\]
Thus, if $g\in L^{\infty}(\Omega),$ then
\begin{equation}
\left\Vert g\right\Vert _{N,1}\leq\left\Vert g^{\ast}\right\Vert _{\infty}%
\int_{0}^{\left\vert \Omega\right\vert }s^{-1/N^{\prime}}\mathrm{d}%
s=N\left\vert \Omega\right\vert ^{\frac{1}{N}}\left\Vert g\right\Vert
_{\infty} \label{gN1}%
\end{equation}
as $\left\Vert g^{\ast}\right\Vert _{\infty}=\left\Vert g\right\Vert _{\infty
}.$

\begin{lemma}
\label{ge}Suppose that $p\geq2$ and either $\partial\Omega\in C^{1,1}$ or
$\Omega$ convex. Let $v\in W_{0}^{1,p}(\Omega)$ be the solution of the
Dirichlet problem
\[
\left\{
\begin{array}
[c]{rrll}%
-\Delta_{p}v & = & g & \text{in }\Omega\\
v & = & 0 & \text{on }\partial\Omega,
\end{array}
\right.
\]
where $g\in L^{\infty}(\Omega).$ Then, there exist positive constants $c$ and
$\gamma,$ that are uniform with respect to $p$ and $g,$ such that
\begin{equation}
\left\Vert \nabla v\right\Vert _{\infty}^{p-1}\leq cp^{\gamma}\left\Vert
g\right\Vert _{\infty}. \label{ge2}%
\end{equation}

\end{lemma}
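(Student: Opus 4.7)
The plan is to invoke the global $L^\infty$-gradient estimate for the $p$-Laplacian derived by Ercole in \cite{GE} (itself an adaptation of the Cianchi--Maz'ya estimates in \cite{CM11}), and then convert the Lorentz-norm right-hand side into an $L^\infty$-norm using the elementary inequality (\ref{gN1}) that has already been recorded in this section.

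More precisely, first I would quote from \cite{GE} an estimate of the form
\[
\|\nabla v\|_\infty^{p-1} \leq C(p, N, \Omega)\, \|g\|_{N,1},
\]
valid for weak solutions of $-\Delta_p v = g$ under either hypothesis ($\partial\Omega \in C^{1,1}$ or $\Omega$ convex with $\partial\Omega \in C^{1,\tau}$), where $C(p,N,\Omega)$ depends on the domain only through $N$, $|\Omega|$, and a measure of the boundary regularity, and grows at most polynomially in $p$. The polynomial growth is the crucial feature extracted in \cite{GE}: tracking the constants in Cianchi--Maz'ya's proof, one sees that $C(p,N,\Omega) \leq c\, p^{\gamma}$ for some $c,\gamma>0$ that do not depend on $p$ or $g$. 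This is the step where all the work happens, and it is exactly what Ercole's paper is designed to supply.

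Second, I would combine this with (\ref{gN1}), which says
\[
\|g\|_{N,1} \leq N|\Omega|^{1/N}\|g\|_\infty.
\]
Substituting yields
\[
\|\nabla v\|_\infty^{p-1} \leq c\, p^\gamma N|\Omega|^{1/N}\|g\|_\infty,
\]
and after absorbing the $p$-independent factor $N|\Omega|^{1/N}$ into a redefined constant $c$, this is exactly (\ref{ge2}).

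The only real obstacle is the first step: one must verify, by inspection of the proof in \cite{GE}, that the constant in the gradient estimate grows at most like a power of $p$ (and not, say, exponentially). Once that polynomial dependence on $p$ is in hand, the rest is a one-line application of (\ref{gN1}). Since the lemma is invoked here only as a black-box input for proving $\lim_{p\to\infty} k_p = 1$ in the next steps, it is enough to note that the existence of such uniform constants $c$ and $\gamma$ is the content of \cite[estimates in the $p$-Laplacian setting]{GE}.
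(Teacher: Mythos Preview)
Your proposal is correct and follows essentially the same route as the paper: quote the gradient estimate from \cite{GE} with its explicit polynomial-in-$p$ constant, then convert the Lorentz norm on the right-hand side to an $L^\infty$ norm via (\ref{gN1}). The paper's proof differs only in that it records the specific exponents from \cite{GE} and treats the cases $N\geq 3$ and $N=2$ separately (in dimension two the cited estimate from \cite{GE} has $\|g\|_q$ for some $q>2$ on the right rather than $\|g\|_{N,1}$, so one uses $\|g\|_q\leq |\Omega|^{1/q}\|g\|_\infty$ instead of (\ref{gN1})), but this is bookkeeping rather than a different idea.
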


\begin{proof}
According to Theorem 1.2 of \cite{GE},
\begin{equation}
\left\Vert \nabla v\right\Vert _{\infty}^{p-1}\leq Cp^{(\frac{5}{2}-\frac
{2}{p})+\frac{\theta N}{\theta-(N-1)}}\left\Vert g\right\Vert _{N,1},
\label{ge1}%
\end{equation}
where $C$ is a positive constant that depends at most on $N$ and $\Omega.$
This estimate holds under the following assumptions: $N\geq3,$ $\partial
\Omega\in W^{2}L^{\theta,1},$ for some $\theta>N-1,$ and $f\in L^{N,1}%
(\Omega).$ Moreover, if the assumption $\partial\Omega\in W^{2}L^{\theta,1}$
is replaced with $\Omega$ convex, then the estimate (\ref{ge1}) writes as%
\[
\left\Vert \nabla v\right\Vert _{\infty}^{p-1}\leq Cp^{(\frac{5}{2}-\frac
{2}{p})}\left\Vert g\right\Vert _{N,1}.
\]

Since $C^{1,1}\subset W^{2}L^{\theta,1},$ (\ref{gN1}) and (\ref{ge1}) lead to
(\ref{ge2}) with $c:=CN\left\vert \Omega\right\vert ^{\frac{1}{N}}$ and
$\gamma=\frac{5}{2}+\frac{\theta N}{\theta-(N-1)}.$ If $\Omega$ is convex we
can take $\gamma=\frac{5}{2}.$

As for $N=2,$ by assuming that $\partial\Omega\in W^{2}L^{\theta,1},$ for some
$\theta>1,$ and $f\in L^{q}(\Omega)$, for some $q>2,$ Theorem 1.3 of \cite{GE}
yields the estimate
\begin{equation}
\left\Vert \nabla v\right\Vert _{\infty}^{p-1}\leq Cp^{(\frac{5}{2}-\frac
{2}{p})+\frac{2\theta}{\theta-1}}\left\Vert g\right\Vert _{q} \label{ge3}%
\end{equation}
where $C$ depends at most on $\Omega$ and $q.$ Hence, as $\left\Vert
g\right\Vert _{q}\leq\left\Vert g\right\Vert _{\infty}\left\vert
\Omega\right\vert ^{1/p}$ and $C^{1,1}\subset W^{2}L^{\theta,1}$ the estimate
(\ref{ge3}) holds with $c:=C\left\vert \Omega\right\vert ^{1/p}$ and
$\gamma:=\frac{5}{2}+\frac{2\theta}{\theta-1}.$ If $\Omega$ is convex, then
(\ref{ge3}) writes as%
\[
\left\Vert \nabla v\right\Vert _{\infty}^{p-1}\leq Cp^{(\frac{5}{2}-\frac
{2}{p})}\left\Vert g\right\Vert _{q}%
\]
in which case we can take $\gamma=\frac{5}{2}.$
\end{proof}

\begin{remark}
Following Cianchi and Maz'ya in \cite{CM11}, the assumption $\partial\Omega\in
W^{2}L^{\theta,1}$ means that the boundary of $\Omega$ is locally the subgraph
of a function of $N-1$ variables whose second-order distributional derivatives
lie on the Lorentz space $L^{\theta,1}.$ The regularity hypothesis
$\partial\Omega\in W^{2}L^{N-1,1}$ is the weakest possible integrability
assumption on second-order derivatives for the first order derivatives to be
continuous, and hence for $\partial\Omega\in C^{1,0}$ \cite{CP98}.
\end{remark}

In the sequel we will use some known results that are gathered in the
following lemma.

\begin{lemma}
\label{known}The following convergence results are well known:
\end{lemma}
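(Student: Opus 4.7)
Since Lemma~\ref{known} is advertised as a compilation of known facts and the excerpt cuts off before the list is displayed, my first step is to identify which $p\to\infty$ asymptotics the subsequent proof of Theorem~\ref{main} is going to need, and then to locate the results in the existing literature. Inspecting the bounds in (\ref{boundsup}) together with the form of the constant $m_\infty$ in Theorem~\ref{main}, the quantities that must be passed to the limit are $\lambda_p^{1/(p-q)}$, the $L^\infty$-normalized first eigenfunction $e_p$, the norm $\|\phi_p\|_\infty$, and the torsion function $\phi_p$ itself. Accordingly, I expect the lemma to gather: (i) the Juutinen--Lindqvist--Manfredi limit $\lim_{p\to\infty}\lambda_p^{1/p}=1/\|d_\Omega\|_\infty$; (ii) uniform convergence of $e_p$ to $d_\Omega/\|d_\Omega\|_\infty$ (or, more generally, to a viscosity solution of the $\infty$-eigenvalue problem which, under the regularity assumed at the start of Section~\ref{sec2}, satisfies the estimates needed in the sequel); (iii) the asymptotics $\lim_{p\to\infty}\|\phi_p\|_\infty^{1/(p-1)}=\|d_\Omega\|_\infty$ for the $p$-torsion function, due to Bhattacharya--DiBenedetto--Manfredi and Kawohl; and (iv) uniform convergence of the renormalized torsion function $\phi_p/\|\phi_p\|_\infty$ to $d_\Omega/\|d_\Omega\|_\infty$ on $\overline\Omega$.

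My plan for the proof is then to produce a short sequence of citations, verifying in each case that the hypotheses quoted in the reference are implied by ``$\partial\Omega\in C^{1,1}$, or $\Omega$ convex with $\partial\Omega\in C^{1,\tau}$.'' The natural order of steps is: first, recall (i), which requires no smoothness on $\partial\Omega$ at all; second, quote (ii), where the regularity hypothesis is used both to obtain uniform $C^0(\overline\Omega)$ convergence and, when needed, to identify the limit with $d_\Omega/\|d_\Omega\|_\infty$; third, combine (i) and (iii) algebraically to produce the derived limit $\lim_{p\to\infty}(\lambda/\lambda_p)^{1/(p-q)}=\|d_\Omega\|_\infty$ that is actually needed to handle the lower bound in (\ref{boundsup}); fourth, state (iv) and observe, using (iii), that $M_p/\|\phi_p\|_\infty\to 1$ once $m_p$ is chosen via (\ref{m<mp}), so that the upper bound in (\ref{boundsup}) collapses onto $d_\Omega$ in the limit.

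The main obstacle, as I see it, is not the location of references but ensuring that each limit is stated in precisely the algebraic form the rest of Section~\ref{sec2} will plug into, since small normalization differences (for instance $\phi_p$ versus $\phi_p^{1/(p-1)}$, or $\lambda_p^{1/p}$ versus $\lambda_p^{1/(p-q)}$) propagate into different powers of $\|d_\Omega\|_\infty$ and must be reconciled carefully against the lower/upper bounds in (\ref{boundsup}) and against the constant $m_\infty$ defined in (\ref{minf}). By contrast, the quantitative statement $\lim_{p\to\infty}k_p=1$ that is flagged in the Introduction is \emph{not} classical and I would not include it in this lemma; it belongs to a separate argument that combines Lemma~\ref{ge} (yielding $\|\nabla w\|_\infty\le(cp^\gamma)^{1/(p-1)}$ for $w\in\mathcal S_p$) with the lower bound (\ref{k-}) and the limit (iii) above.
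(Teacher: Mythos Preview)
Your overall plan---treat the lemma as a citation list and keep $\lim_{p\to\infty}k_p=1$ separate---matches the paper exactly. The paper gives no proof at all; it simply lists three items with references and then proves $k_p\to1$ in an independent proposition using Lemma~\ref{ge} and (\ref{k-}), just as you anticipated.

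However, two of your guessed items are off and would cause trouble downstream. First, your (iii) is misnormalized: with $\phi_p$ defined by $-\Delta_p\phi_p=1$, the known limit is the \emph{uniform} convergence $\phi_p\to d_\Omega$ on $\overline\Omega$ (Bhattacharya--DiBenedetto--Manfredi, Kawohl), hence $\|\phi_p\|_\infty\to\|d_\Omega\|_\infty$; the quantity $\|\phi_p\|_\infty^{1/(p-1)}$ tends to $1$, not to $\|d_\Omega\|_\infty$. This matters because the upper bound in (\ref{boundsup}) and the computations in Lemma~\ref{Mpto} use $\|\phi_p\|_\infty$ itself, not a $(p-1)$-th root. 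Your (iv) then becomes redundant: the paper states the single item ``$\phi_p\to d_\Omega$ uniformly'' and deduces everything else from it.

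Second, your (ii) overstates what is available. The paper records only \emph{subsequential} uniform convergence of $e_{p_n}$ to some $e_\infty\in W^{1,\infty}(\Omega)\cap C_0(\overline\Omega)$ with $\|e_\infty\|_\infty=1$ and $0<e_\infty\le d_\Omega/\|d_\Omega\|_\infty$; Remark~\ref{square} explicitly notes that $e_\infty=d_\Omega/\|d_\Omega\|_\infty$ can fail (e.g.\ for a square), so the regularity assumption on $\partial\Omega$ does \emph{not} upgrade this to an identification. This is harmless for Theorem~\ref{main}, because the proof of Proposition~\ref{main2} uses only the upper bound $u_\infty\le d_\Omega$ and the variational characterization to force $u_\infty=d_\Omega$; the lower bound via $e_\infty$ is recorded but never exploited. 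So your item list works in spirit, but the paper's three items are: (1) $\phi_p\to d_\Omega$ uniformly; (2) $\lambda_p^{1/p}\to\|d_\Omega\|_\infty^{-1}$; (3) subsequential convergence of $e_p$ to some positive $e_\infty$ bounded above by $d_\Omega/\|d_\Omega\|_\infty$.
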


\begin{enumerate}
\item $\phi_{p}$ converges uniformly in $\overline{\Omega}$ to $d_{\Omega}$ as
$p\rightarrow\infty$ (see \cite{BDM, K1}).

\item $\lim_{p\rightarrow\infty}\lambda_{p}^{1/p}=\left\Vert d_{\Omega
}\right\Vert _{\infty}^{-1}$ (see \cite{ARMA99}).

\item For each sequence $\left(  p_{n}\right)  ,$ with $p_{n}\rightarrow
\infty,$ there exists a subsequence $\left(  p_{n_{j}}\right)  $ and a
function $e_{\infty}\in W^{1,\infty}(\Omega)\cap C_{0}(\overline{\Omega})$
such that: $\left\Vert e_{\infty}\right\Vert _{\infty}=1,$ $e_{p_{n_{j}}}$
converges uniformly in $\overline{\Omega}$ to $e_{\infty},$ and
\[
0<e_{\infty}\leq\frac{d_{\Omega}}{\left\Vert d_{\Omega}\right\Vert _{\infty}%
}\text{ in }\overline{\Omega}\text{ (see \cite{ARMA99}).}%
\]

\end{enumerate}

\begin{remark}
\label{square}The strict positiveness of $e_{\infty}$ follows from the Harnack
inequality proved in \cite[Theorem 1]{Bhatta} (see also \cite[Corollary
4.5]{ML}) \ since $e_{\infty}$ is $\infty$-superharmonic and not identically
zero ($\left\Vert e_{\infty}\right\Vert _{\infty}=1$). The equality
$e_{\infty}=\frac{d_{\Omega}}{\left\Vert d_{\Omega}\right\Vert _{\infty}}$
does not hold for a general bounded domain $\Omega.$ It holds for balls,
annuli\ and stadiums (see \cite{Yu}), but not for a square, for example (see
\cite[Proposition 4.1]{ARMA99}).
\end{remark}

The following result is crucial in our analysis.

\begin{proposition}
One has%
\begin{equation}
\lim_{p\rightarrow\infty}k_{p}=1. \label{kinf}%
\end{equation}

\end{proposition}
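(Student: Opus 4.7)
The plan is to sandwich $k_p$ between two sequences that both tend to $1$.

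For the upper bound, I would exploit Lemma \ref{ge} directly. Given any $w \in \mathcal{S}_p$, the function $w$ solves $-\Delta_p w = g$ for some $g \in L^\infty(\Omega)$ with $\|g\|_\infty = 1$, so Lemma \ref{ge} yields
\[
\|\nabla w\|_\infty^{p-1} \leq c p^\gamma \|g\|_\infty = c p^\gamma,
\]
where $c, \gamma > 0$ are independent of $p$ and $w$. Taking the supremum over $\mathcal{S}_p$ gives $k_p \leq (c p^\gamma)^{1/(p-1)}$. Since
\[
\log\!\bigl((c p^\gamma)^{1/(p-1)}\bigr) = \frac{\log c + \gamma \log p}{p-1} \longrightarrow 0 \quad \text{as } p \to \infty,
\]
we conclude that $\limsup_{p \to \infty} k_p \leq 1$.

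For the lower bound, I would invoke the chain of inequalities already recorded in (\ref{k-}), namely
\[
k_p \geq \|\nabla \phi_p\|_\infty \geq \frac{\|\phi_p\|_\infty}{\|d_\Omega\|_\infty}.
\]
By item 1 of Lemma \ref{known}, $\phi_p \to d_\Omega$ uniformly in $\overline{\Omega}$ as $p \to \infty$; in particular $\|\phi_p\|_\infty \to \|d_\Omega\|_\infty$, so the right-most quotient tends to $1$. Therefore $\liminf_{p \to \infty} k_p \geq 1$, which combined with the previous bound yields (\ref{kinf}).

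There is no real obstacle here: all the heavy lifting has been packaged into Lemma \ref{ge} (the Cianchi--Maz'ya/Ercole gradient estimate) and into the classical uniform convergence of the $p$-torsion function to $d_\Omega$. The only observation worth emphasizing is that the Cianchi--Maz'ya bound is the right tool because it is explicit in $p$: the polynomial growth $p^\gamma$ is killed by the root $1/(p-1)$, producing the crucial fact that the universal $L^\infty$-gradient bound on $\mathcal{S}_p$ collapses to $1$ in the limit.
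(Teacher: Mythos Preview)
Your argument is correct and matches the paper's proof essentially line for line: both use Lemma~\ref{ge} to bound $k_p \le (cp^{\gamma})^{1/(p-1)}\to 1$ for the upper estimate, and (\ref{k-}) together with the uniform convergence $\phi_p\to d_\Omega$ for the lower estimate. The only cosmetic difference is the order in which the two bounds are presented.
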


\begin{proof}
We observe from (\ref{k-}) and item $1$ of Lemma \ref{known} that%
\[
1=\lim_{p\rightarrow\infty}\frac{\left\Vert \phi_{p}\right\Vert _{\infty}%
}{\left\Vert d_{\Omega}\right\Vert _{\infty}}\leq\liminf_{p\rightarrow\infty
}k_{p}.
\]

According to Lemma \ref{ge}%
\[
\left\Vert \nabla w\right\Vert _{\infty}\leq(cp^{\gamma})^{\frac{1}{p-1}%
}\text{\ \ for all }w\in\mathcal{S}_{p},\text{ }p\geq2.
\]
Consequently,
\[
k_{p}\leq(cp^{\gamma})^{\frac{1}{p-1}}\text{\ \ for all }p\geq2,
\]
so that%
\[
\limsup_{p\rightarrow\infty}k_{p}\leq\lim_{p\rightarrow\infty}(cp^{\gamma
})^{\frac{1}{p-1}}=1.
\]

\end{proof}

\begin{lemma}
\label{Mpto}One has%
\begin{equation}
\lim_{p\rightarrow\infty}M_{p}=\left\Vert d_{\Omega}\right\Vert _{\infty},
\label{liMp}%
\end{equation}%
\begin{equation}
\lim_{p\rightarrow\infty}\left(  \frac{\left\Vert \phi_{p}\right\Vert
_{\infty}}{M_{p}}\right)  ^{p}=\frac{1}{2(\lambda\left\Vert d_{\Omega
}\right\Vert _{\infty}^{q-1}+\beta\left\Vert d_{\Omega}\right\Vert _{\infty
}^{a-1})} \label{liMp1}%
\end{equation}
and%
\begin{equation}
\lim_{p\rightarrow\infty}m_{p}=m_{\infty}, \label{limp}%
\end{equation}
where $m_{\infty}$ is defined in (\ref{minf}).
\end{lemma}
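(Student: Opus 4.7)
The plan is to analyze the rescaled ratio $\rho_p := M_p / \|\phi_p\|_\infty$ and extract all three limits from its asymptotics. Inserting the formulas $A_p = \|\phi_p\|_\infty^{p-1}$ and $B_p = k_p^{b}\|\phi_p\|_\infty^{p-1-b}$ into the defining equation (\ref{Mp}), cancelling powers of $\|\phi_p\|_\infty$, and multiplying through by $\rho_p^p$ yields the key identity
\[
\rho_p^{p} \;=\; 2\bigl(\lambda\,\|\phi_p\|_\infty^{q-1}\,\rho_p^{q} \;+\; \beta\,k_p^{b}\,\|\phi_p\|_\infty^{a-1}\,\rho_p^{a+b}\bigr),
\]
which will drive the whole argument.

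First I would establish that $\rho_p$ is eventually confined to a compact subinterval of $(0,\infty)$. By item 1 of Lemma \ref{known} and by (\ref{kinf}), both $\|\phi_p\|_\infty$ and $k_p$ are bounded above and below by positive constants for $p$ large. Reading (\ref{Mp}) in its original form
\[
\lambda\,\|\phi_p\|_\infty^{q-1}\,\rho_p^{q-p} + \beta\,k_p^{b}\,\|\phi_p\|_\infty^{a-1}\,\rho_p^{a+b-p} \;=\; \tfrac12,
\]
a subsequence along which $\rho_p \to +\infty$ would send both terms to $0$ (since $p>q$ and $p>a+b$), while $\rho_p \to 0$ would blow at least one term up; hence $\rho_p \in [1/C,C]$ for some fixed $C>0$ and all sufficiently large $p$.

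With $\rho_p$ thus bounded, the right-hand side of the key identity is sandwiched between two positive constants independent of $p$, so $\rho_p^{p} \in [c_1,c_2]$ for suitable $0<c_1<c_2<\infty$. Taking $p$-th roots gives $\rho_p \in [c_1^{1/p},c_2^{1/p}]$, and passing to the limit forces $\rho_p \to 1$. Feeding this back into the key identity, together with $\|\phi_p\|_\infty \to \|d_\Omega\|_\infty$ and $k_p \to 1$, then yields
\[
\lim_{p\to\infty}\rho_p^{p} \;=\; L \;:=\; 2\bigl(\lambda\,\|d_\Omega\|_\infty^{q-1} + \beta\,\|d_\Omega\|_\infty^{a-1}\bigr).
\]

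The three conclusions now follow by direct substitution. For (\ref{liMp}), $M_p = \rho_p\,\|\phi_p\|_\infty \to \|d_\Omega\|_\infty$; for (\ref{liMp1}), $(\|\phi_p\|_\infty/M_p)^p = \rho_p^{-p} \to 1/L$, which is the asserted value; and for (\ref{limp}), one expands
\[
m_p \;=\; \frac{M_p^{p-l}}{2\,\|\phi_p\|_\infty^{p-1}\,e^{\alpha M_p^s}} \;=\; \frac{\rho_p^{p-l}\,\|\phi_p\|_\infty^{1-l}}{2\,e^{\alpha M_p^s}},
\]
and uses $\rho_p^{p-l} = \rho_p^{p}\,\rho_p^{-l} \to L$ together with continuity of the exponential. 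The only genuine obstacle is the passage from ``$\rho_p$ bounded'' to ``$\rho_p \to 1$'', and this is exactly what the rearrangement above handles cleanly: it converts a two-sided bound on $\rho_p$ into a two-sided bound on $\rho_p^{p}$, and extracting the $p$-th root collapses the latter to the value $1$.
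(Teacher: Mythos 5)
Your proof is correct, and although it shares the paper's skeleton---rewrite the defining equation (\ref{Mp}) in terms of the ratio between $M_p$ and $\left\Vert \phi_p\right\Vert_\infty$, then feed in $\left\Vert \phi_p\right\Vert_\infty \rightarrow \left\Vert d_\Omega\right\Vert_\infty$ and $k_p \rightarrow 1$---the mechanism you use for the central step is genuinely different. The paper proves $\lim_{p\rightarrow\infty} \left\Vert \phi_p\right\Vert_\infty/M_p = 1$ by two one-sided estimates: dropping the $\beta$-term in (\ref{Mp1}) gives the $\limsup \leq 1$ bound, and a ``without loss of generality'' case split on whether $a+b \geq q$ or $a+b < q$ (needed to compare the exponents $p-q$ and $p-(a+b)$) gives the $\liminf \geq 1$ bound. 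You instead first confine $\rho_p := M_p/\left\Vert \phi_p\right\Vert_\infty$ to a compact subinterval of $(0,\infty)$ by a subsequence--contradiction argument (valid, since the coefficients $\lambda\left\Vert \phi_p\right\Vert_\infty^{q-1}$ and $\beta k_p^b \left\Vert \phi_p\right\Vert_\infty^{a-1}$ are pinched between positive constants for large $p$, and the exponents $p-q$, $p-(a+b)$ tend to $+\infty$), then multiply the identity through by $\rho_p^p$ to sandwich $\rho_p^p$ between two constants and extract the $p$-th root. This buys two things: it bypasses the paper's case analysis on $a+b$ versus $q$ entirely, and it establishes the \emph{existence} of $\lim_{p\rightarrow\infty}\rho_p^p$ as an honest consequence of the identity, whereas the paper simply sets $L := \lim_{p\rightarrow\infty}\left(\left\Vert \phi_p\right\Vert_\infty/M_p\right)^p$ without first justifying that this limit exists---your route supplies precisely that justification. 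The final substitutions yielding (\ref{liMp}), (\ref{liMp1}) and (\ref{limp}) coincide with the paper's. One remark for your awareness: your computation, exactly like the paper's own, produces $m_p \rightarrow \left(\lambda\left\Vert d_\Omega\right\Vert_\infty^{q-1} + \beta\left\Vert d_\Omega\right\Vert_\infty^{a-1}\right)\left\Vert d_\Omega\right\Vert_\infty^{1-l} e^{-\alpha\left\Vert d_\Omega\right\Vert_\infty^{s}}$, i.e.\ with exponent $1-l$ on $\left\Vert d_\Omega\right\Vert_\infty$; the exponent $l-1$ printed in (\ref{minf}) appears to be a sign typo in the paper, so no fault attaches to your argument on this point.
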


\begin{proof}
We can write (\ref{Mp}) as
\begin{equation}
\lambda\left\Vert \phi_{p}\right\Vert _{\infty}^{q-1}\left(  \frac{\left\Vert
\phi_{p}\right\Vert _{\infty}}{M_{p}}\right)  ^{p-q}+\beta\left\Vert \phi
_{p}\right\Vert _{\infty}^{a-1}k_{p}^{b}\left(  \frac{\left\Vert \phi
_{p}\right\Vert _{\infty}}{M_{p}}\right)  ^{p-r}=\frac{1}{2} \label{Mp1}%
\end{equation}
where%
\[
r:=a+b.
\]

It follows from (\ref{Mp1}) that
\[
\frac{\left\Vert \phi_{p}\right\Vert _{\infty}}{M_{p}}\leq\frac{1}%
{(2\lambda\left\Vert \phi_{p}\right\Vert _{\infty}^{q-1})^{1/(p-q)}}%
\]
so that%
\begin{equation}
\limsup_{p\rightarrow\infty}\frac{\left\Vert \phi_{p}\right\Vert _{\infty}%
}{M_{p}}\leq\lim_{p\rightarrow\infty}\frac{1}{(2\lambda\left\Vert \phi
_{p}\right\Vert _{\infty}^{q-1})^{1/(p-q)}}=1. \label{ls2}%
\end{equation}
Here we have used the fact that
\begin{equation}
\lim_{p\rightarrow\infty}\left\Vert \phi_{p}\right\Vert _{\infty}=\left\Vert
d_{\Omega}\right\Vert _{\infty} \label{kw}%
\end{equation}
(according to item 1 from Lemma \ref{known}).

Without loss of generality we analyze the case $r\geq q$ (the case $r<q$ is
analogous), so that
\[
\left(  \frac{\left\Vert \phi_{p}\right\Vert _{\infty}}{M_{p}}\right)
^{p-q}\leq\left(  \frac{\left\Vert \phi_{p}\right\Vert _{\infty}}{M_{p}%
}\right)  ^{p-r}%
\]
for all $p$ sufficiently large. Hence, (\ref{Mp1}) yields%
\[
\frac{1}{2}\leq\left(  \lambda\left\Vert \phi_{p}\right\Vert _{\infty}%
^{q-1}+\beta\left\Vert \phi_{p}\right\Vert _{\infty}^{a-1}k_{p}^{b}\right)
\left(  \frac{\left\Vert \phi_{p}\right\Vert _{\infty}}{M_{p}}\right)  ^{p-r}%
\]
for all $p$ sufficiently large. Then, using (\ref{kinf}) and (\ref{kw}) we
make $p\rightarrow\infty$ in the inequality
\[
\left(  \frac{1}{2}\right)  ^{\frac{1}{p-r}}\left(  \lambda\left\Vert \phi
_{p}\right\Vert _{\infty}^{q-1}+\beta\left\Vert \phi_{p}\right\Vert _{\infty
}^{a-1}k_{p}^{b}\right)  ^{-\frac{1}{p-r}}\leq\frac{\left\Vert \phi
_{p}\right\Vert _{\infty}}{M_{p}}%
\]
to find
\begin{equation}
1\leq\liminf_{p\rightarrow\infty}\frac{\left\Vert \phi_{p}\right\Vert
_{\infty}}{M_{p}}. \label{li2}%
\end{equation}

Combining (\ref{ls2}) and (\ref{li2}) we conclude that
\begin{equation}
\lim_{p\rightarrow\infty}\frac{\left\Vert \phi_{p}\right\Vert _{\infty}}%
{M_{p}}=1 \label{Mp2}%
\end{equation}
and then, in view of (\ref{kw}), we obtain (\ref{liMp}).

Now, let us set
\[
L:=\lim_{p\rightarrow\infty}\left(  \frac{\left\Vert \phi_{p}\right\Vert
_{\infty}}{M_{p}}\right)  ^{p}.
\]
Combining (\ref{kinf}), (\ref{kw}) and (\ref{Mp2}) we obtain from (\ref{Mp1})
the equality%
\[
\lambda\left\Vert d_{\Omega}\right\Vert _{\infty}^{q-1}L+\beta\left\Vert
d_{\Omega}\right\Vert _{\infty}^{a-1}L=\frac{1}{2},
\]
which leads to (\ref{liMp1}).

Finally, after noticing that%

\[
m_{p}=\frac{M_{p}^{p-l}}{2A_{p}e^{\alpha M_{p}^{s}}}=\frac{1}{2}\left(
\frac{M_{p}}{\left\Vert \phi_{p}\right\Vert _{\infty}}\right)  ^{p-1}%
M_{p}^{1-l}\frac{1}{e^{\alpha M_{p}^{s}}}%
\]
we obtain (\ref{limp}) from (\ref{liMp}) and (\ref{liMp1}).
\end{proof}

\begin{lemma}
\label{uconv}If $p_{n}\rightarrow\infty,$ then there exists a subsequence
$(u_{p_{n_{j}}})$ converging uniformly in $\overline{\Omega}$ to a function
$u_{\infty}\in W^{1,\infty}(\Omega)\cap C_{0}(\overline{\Omega})$ such that
\begin{equation}
\left\Vert d_{\Omega}\right\Vert _{\infty}e_{\infty}\leq u_{\infty}\leq
d_{\Omega}\text{ \ in }\overline{\Omega}\text{ \ } \label{infbounds}%
\end{equation}
where $e_{\infty}$ is a positive $\infty$-superharmonic function satisfying
$\left\Vert e_{\infty}\right\Vert _{\infty}=1.$
\end{lemma}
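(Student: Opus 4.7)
The plan is to pass to the limit in the two bounds of (\ref{boundsup}) along a suitable subsequence, using Arzelà--Ascoli together with the already established asymptotic identities $k_p\to 1$ (from (\ref{kinf})), $\|\phi_p\|_\infty\to\|d_\Omega\|_\infty$ and $M_p\to\|d_\Omega\|_\infty$ (from Lemma \ref{Mpto}), and $\phi_p\to d_\Omega$ uniformly (item~1 of Lemma \ref{known}).

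First, I would combine these limits with (\ref{boundsup}) to produce uniform $L^\infty$-bounds for both $u_p$ and $\nabla u_p$ along the sequence $p_n$: indeed $u_{p_n}\leq (M_{p_n}/\|\phi_{p_n}\|_\infty)\phi_{p_n}$ is uniformly bounded in $\overline{\Omega}$ because the prefactor tends to $1$ and $\phi_{p_n}\to d_\Omega$ uniformly, while $\|\nabla u_{p_n}\|_\infty\leq k_{p_n}M_{p_n}/\|\phi_{p_n}\|_\infty$ is uniformly bounded for the same reason. Thus the family $(u_{p_n})$ is equicontinuous and uniformly bounded in $C(\overline\Omega)$, and Arzelà--Ascoli yields a subsequence, still denoted by $u_{p_{n_j}}$, converging uniformly on $\overline\Omega$ to some $u_\infty\in C(\overline\Omega)$. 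Since each $u_{p_n}$ vanishes on $\partial\Omega$ and the convergence is uniform, $u_\infty\in C_0(\overline\Omega)$; since each $u_{p_n}$ is Lipschitz with constant $k_{p_n}M_{p_n}/\|\phi_{p_n}\|_\infty$, which tends to $\|d_\Omega\|_\infty$, the uniform limit $u_\infty$ is Lipschitz as well, hence $u_\infty\in W^{1,\infty}(\Omega)$.

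Next, I would pass to the limit in both pointwise bounds in (\ref{boundsup}). The upper bound gives
\[
u_\infty(x)=\lim_{j\to\infty}u_{p_{n_j}}(x)\leq\lim_{j\to\infty}\frac{M_{p_{n_j}}}{\|\phi_{p_{n_j}}\|_\infty}\phi_{p_{n_j}}(x)=d_\Omega(x).
\]
For the lower bound, I would further extract (along the already chosen sequence) a subsequence provided by item~3 of Lemma \ref{known}, along which $e_{p_{n_j}}$ converges uniformly on $\overline\Omega$ to a positive $\infty$-superharmonic function $e_\infty$ with $\|e_\infty\|_\infty=1$. Using item~2 of Lemma \ref{known}, the normalization
\[
\Bigl(\frac{\lambda}{\lambda_{p}}\Bigr)^{1/(p-q)}=\lambda^{1/(p-q)}\bigl(\lambda_p^{1/p}\bigr)^{-p/(p-q)}
\]
tends to $\|d_\Omega\|_\infty$ as $p\to\infty$, so passing to the limit in $u_{p_{n_j}}\geq (\lambda/\lambda_{p_{n_j}})^{1/(p_{n_j}-q)}e_{p_{n_j}}$ yields $u_\infty\geq \|d_\Omega\|_\infty\,e_\infty$ on $\overline\Omega$.

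The only slightly delicate point is the bookkeeping of the two successive diagonal extractions (first from $(u_{p_n})$ by Arzelà--Ascoli, then from the resulting $(e_{p_{n_j}})$ by item~3 of Lemma \ref{known}); otherwise all steps reduce to passing to a uniform limit in inequalities whose prefactors have already been computed in Lemma \ref{Mpto} and the proposition on $k_p$. The strict positivity and $\infty$-superharmonicity of $e_\infty$ are inherited directly from Lemma \ref{known} and Remark \ref{square}, so no further argument is needed there.
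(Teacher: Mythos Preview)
Your proof is correct and follows essentially the same route as the paper's own argument: Arzel\`a--Ascoli based on the uniform bounds in (\ref{boundsup}) together with the already-established limits for $k_p$, $M_p$, $\|\phi_p\|_\infty$, $\phi_p$, $\lambda_p^{1/p}$, and $e_p$, followed by passage to the limit in the pointwise inequalities. One harmless slip: the Lipschitz constant $k_{p_n}M_{p_n}/\|\phi_{p_n}\|_\infty$ tends to $1$, not to $\|d_\Omega\|_\infty$, since $M_{p_n}/\|\phi_{p_n}\|_\infty\to 1$ by (\ref{Mp2}).
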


\begin{proof}
Combining Lemma \ref{known} with (\ref{boundsup}), (\ref{kinf}) and
(\ref{liMp}) we have that%
\[
\lim_{p\rightarrow\infty}\left\Vert u_{p}\right\Vert _{\infty}=d_{\Omega
}\text{ \ and \ }\limsup_{p\rightarrow\infty}\left\Vert \nabla u_{p}%
\right\Vert _{\infty}\leq1\text{ \ in }\overline{\Omega}.
\]
Therefore, by Arzel\'{a}-Ascoli Theorem there exists a subsequence
$(u_{p_{n_{j}}})$ converging uniformly in $\overline{\Omega}$ to a function
$u_{\infty}\in W^{1,\infty}(\Omega)\cap C_{0}(\overline{\Omega}).$ By item 3
of Lemma \ref{known} we can assume that $e_{p_{n_{j}}}$ converges uniformly to
a positive $\infty$-superharmonic function satisfying $\left\Vert e_{\infty
}\right\Vert _{\infty}=1.$

Hence, taking into account item 2 of Lemma \ref{known}, the inequalities in
(\ref{infbounds}) follow after letting $j\rightarrow\infty$ in the estimates%
\[
\left(  \frac{\lambda}{\lambda_{p_{n_{j}}}}\right)  ^{1/(p_{n_{j}}%
-q)}e_{p_{n_{j}}}\leq u_{p_{n_{j}}}\leq\frac{M_{p_{n_{j}}}}{\left\Vert
\phi_{p_{n_{j}}}\right\Vert _{\infty}}\phi_{p_{n_{j}}}\text{.}%
\]

\end{proof}

\begin{proposition}
\label{main2}Assume that $\partial\Omega\in C^{1,1}.$ Let $q,$ $a,$ $b,$ $l,$
$\lambda$ and $\beta$ be fixed, with $q>1,$ $a,l\geq1,$ and $b,s,\alpha
,\lambda,\beta>0.$ For each $p>\max\left\{  q,a+b,l\right\}  $ let $M_{p},$
$m_{p}$ and $u_{p}\in W_{0}^{1,p}(\Omega)$ be as in Corollary \ref{up}. Then,%
\[
\lim_{p\rightarrow\infty}u_{p}=d_{\Omega}\text{ uniformly in }\overline
{\Omega}.
\]

\end{proposition}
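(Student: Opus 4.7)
The plan is to show that every subsequence of $(u_p)$ admits a further subsequence converging uniformly to $d_\Omega$; by the subsequence principle, this upgrades to $u_p\to d_\Omega$ uniformly. Given $p_n\to\infty$, Lemma \ref{uconv} provides indices $(p_{n_j})$ and a limit $u_\infty\in W^{1,\infty}(\Omega)\cap C_0(\overline{\Omega})$ with $u_{p_{n_j}}\to u_\infty$ uniformly on $\overline{\Omega}$ and
\[
\left\Vert d_\Omega\right\Vert_\infty\,e_\infty\leq u_\infty\leq d_\Omega\quad\text{on }\overline{\Omega},
\]
where $e_\infty>0$ in $\Omega$. The right-hand inequality is already half of the desired identity; the remaining task is to prove $u_\infty(x_0)\geq d_\Omega(x_0)$ at each $x_0\in\Omega$.

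To this end, fix $x_0\in\Omega$ and $\epsilon\in(0,d_\Omega(x_0))$, set $R:=d_\Omega(x_0)-\epsilon$ and $B:=B(x_0,R)$, so that $\overline{B}\subset\Omega$. By continuity and strict positivity of $e_\infty$, $c:=\left\Vert d_\Omega\right\Vert_\infty\min_{\overline{B}}e_\infty>0$, and uniform convergence forces $u_{p_{n_j}}\geq c/2$ on $\overline{B}$ for all $j$ sufficiently large. Since $u_{p_{n_j}}$ solves (\ref{P}) and every term on the right-hand side is nonnegative,
\[
-\Delta_{p_{n_j}}u_{p_{n_j}}\geq \lambda\,u_{p_{n_j}}^{q-1}\geq \lambda(c/2)^{q-1}=:c_0>0\quad\text{in }B.
\]
Let $\phi_{p_{n_j}}^{B}$ denote the $p_{n_j}$-torsion function of $B$, for which the closed-form radial expression recalled at the start of Section \ref{sec1} is available. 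The function $w_j:=c_0^{1/(p_{n_j}-1)}\phi_{p_{n_j}}^{B}$ then satisfies $-\Delta_{p_{n_j}}w_j=c_0$ in $B$ and $w_j=0$ on $\partial B$, while $u_{p_{n_j}}>0=w_j$ on $\partial B$. The weak comparison principle gives $u_{p_{n_j}}\geq w_j$ throughout $B$.

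Evaluating at $x_0$ and using
\[
\phi_{p_{n_j}}^{B}(x_0)=\tfrac{p_{n_j}-1}{p_{n_j}}N^{-1/(p_{n_j}-1)}R^{p_{n_j}/(p_{n_j}-1)}\longrightarrow R,\qquad c_0^{1/(p_{n_j}-1)}\longrightarrow 1,
\]
passing to the limit yields $u_\infty(x_0)\geq R=d_\Omega(x_0)-\epsilon$. Letting $\epsilon\downarrow 0$ and varying $x_0$ give $u_\infty\geq d_\Omega$ in $\Omega$; combined with the reverse inequality and the common boundary value zero, this proves $u_\infty=d_\Omega$ on $\overline{\Omega}$, from which the uniform convergence of the full family follows. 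The main technical hurdle is securing the uniform positive lower bound $u_{p_{n_j}}\geq c/2$ on the inscribed ball $\overline{B}$: it is exactly what licenses the explicit $p$-torsion function of $B$ to play the role of a competing subsolution, and it rests on the strict positivity of the subsequential limit $e_\infty$ supplied by Lemma \ref{uconv} and Remark \ref{square}.
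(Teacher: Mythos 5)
Your proof is correct, and it takes a genuinely different route from the paper's. Both arguments begin identically, invoking Lemma \ref{uconv} to extract a uniformly convergent subsequence with limit $u_{\infty}$ satisfying $\left\Vert d_{\Omega}\right\Vert _{\infty}e_{\infty}\leq u_{\infty}\leq d_{\Omega}$, and both finish by the subsequence-uniqueness principle; the difference is how $u_{\infty}=d_{\Omega}$ is established. The paper exploits the variational structure: it regards $u_{p}$ as the unique positive solution (hence, via the uniqueness of \cite{DS}, the unique positive minimizer) of the problem with frozen inhomogeneity $h_{p}:=\beta u_{p}^{a-1}\left\vert \nabla u_{p}\right\vert ^{b}+mu_{p}^{l-1}e^{\alpha u_{p}^{s}}$, and the energy comparison $I_{p}(u_{p})\leq I_{p}(d_{\Omega})$, using $\left\vert \nabla d_{\Omega}\right\vert =1$ a.e., gives $\frac{1}{q}\int_{\Omega}(d_{\Omega}^{q}-u_{p}^{q})\,\mathrm{d}x+\int_{\Omega}h_{p}(d_{\Omega}-u_{p})\,\mathrm{d}x\leq\left\vert \Omega\right\vert /p$; passing to the limit with the uniform bound (\ref{aux2}) on $h_{p}$ and the one-sided information $u_{\infty}\leq d_{\Omega}$ forces $\int_{\Omega}(d_{\Omega}^{q}-u_{\infty}^{q})\,\mathrm{d}x=0$. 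You instead prove the pointwise lower bound $u_{\infty}\geq d_{\Omega}$ by a local barrier: the strict positivity of $e_{\infty}$ (supplied by the Harnack inequality, Remark \ref{square}) gives a positive lower bound for $u_{p_{n_{j}}}$ on any closed inscribed ball, so that $u_{p_{n_{j}}}$ is a weak supersolution of $-\Delta_{p}v=c_{0}>0$ there, and comparison with the explicit radial torsion function $c_{0}^{1/(p-1)}\phi_{p}^{B}$ (whose value at the center tends to the radius, since $c_{0}^{1/(p-1)}\rightarrow1$) yields $u_{\infty}(x_{0})\geq d_{\Omega}(x_{0})-\epsilon$. I verified the delicate points: the test functions supported in $B$ are admissible, $u_{p}>0=w_{j}$ on $\partial B$ legitimizes the weak comparison principle, $q>1$ makes $t\mapsto t^{q-1}$ monotone so $u_{p}\geq c/2$ gives $c_{0}=\lambda(c/2)^{q-1}$, and $c$ depends on the subsequence but not on $j$, which is harmless. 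The trade-off: your argument is more elementary and local, needing neither the D\'{\i}az--Saa uniqueness and minimizer characterization nor any bound on the convection--exponential term $h_{p}$, only the comparison principle and the closed-form torsion function recalled at the start of Section \ref{sec1}; on the other hand, it genuinely relies on the lower half of (\ref{infbounds}) and the strict positivity of $e_{\infty}$, which the paper's energy argument bypasses entirely (the paper uses only $u_{\infty}\leq d_{\Omega}$).
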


\begin{proof}
It follows from Lemma \ref{uconv} that, up to subsequence, $u_{p}$ converges
uniformly in $\overline{\Omega}$ to a function $u_{\infty}\in W^{1,\infty
}(\Omega)\cap C_{0}(\overline{\Omega})$ satisfying (\ref{infbounds}).

We recall that $u_{p}$ is also the only weak solution to the Dirichlet problem%
\begin{equation}
\left\{
\begin{array}
[c]{rrll}%
-\Delta_{p}u & = & \lambda\left\vert u\right\vert ^{q-2}u+h_{p} & \text{in
}\Omega\\
u & > & 0 & \text{in }\Omega,\\
u & = & 0 & \text{on }\partial\Omega
\end{array}
\right.  \label{dirih}%
\end{equation}
where%
\[
h_{p}:=\beta u_{p}^{a-1}\left\vert \nabla u_{p}\right\vert ^{b}+mu_{p}%
^{l-1}e^{\alpha u_{p}^{s}}.
\]

As
\[
0\leq h_{p}\leq M_{p}^{a-1}\left(  \frac{k_{p}M_{p}}{\left\Vert \phi
_{p}\right\Vert _{\infty}}\right)  ^{b}+mM_{p}^{l-1}e^{\alpha M_{p}^{s}}%
\]
we note that
\begin{equation}
\limsup_{p\rightarrow\infty}\left\Vert h_{p}\right\Vert _{\infty}\leq
\beta\left\Vert d_{\Omega}\right\Vert _{\infty}^{a-1}+m\left\Vert d_{\Omega
}\right\Vert _{\infty}^{l-1}e^{\alpha\left\Vert d_{\Omega}\right\Vert
_{\infty}^{s}}<\infty. \label{aux2}%
\end{equation}

We also know that the solution to (\ref{dirih}) is the only positive minimizer
of the functional%
\[
I_{p}(v)=\frac{1}{p}\left\Vert \nabla v\right\Vert _{p}^{p}-\frac{1}{q}%
\int_{\Omega}\left\vert v\right\vert ^{q}\mathrm{d}x-\int_{\Omega}%
h_{p}v\mathrm{d}x,\text{ \ }v\in W_{0}^{1,p}(\Omega).
\]
Hence, recalling that $d_{\Omega}\in W_{0}^{1,p}(\Omega)$ and that $\left\vert
\nabla d_{\Omega}\right\vert =1$ a.e. in $\Omega,$ we obtain from the
inequality $I_{p}(u_{p})\leq I_{p}(d_{\Omega})$ that%
\begin{equation}
\frac{1}{q}\int_{\Omega}(d_{\Omega}^{q}-u_{p}^{q})\mathrm{d}x+\int_{\Omega
}h_{p}(d_{\Omega}-u_{p})\mathrm{d}x\leq\frac{\left\vert \Omega\right\vert }%
{p}-\frac{1}{p}\left\Vert \nabla u_{p}\right\Vert _{p}^{p}\leq\frac{\left\vert
\Omega\right\vert }{p}. \label{aux1}%
\end{equation}

It follows from (\ref{aux1}) that%
\begin{equation}
\limsup_{p\rightarrow\infty}\left[  \frac{1}{q}\int_{\Omega}(d_{\Omega}%
^{q}-u_{p}^{q})\mathrm{d}x+\int_{\Omega}h_{p}(d_{\Omega}-u_{p})\mathrm{d}%
x\right]  \leq0. \label{aux3}%
\end{equation}

As $d_{\Omega}-u_{\infty}\geq0$, the uniform convergence from $u_{p}$ to
$u_{\infty}$ implies that%
\begin{equation}
\lim_{p\rightarrow\infty}\int_{\Omega}(d_{\Omega}^{q}-u_{p}^{q})\mathrm{d}%
x=\int_{\Omega}(d_{\Omega}^{q}-u_{\infty}^{q})\mathrm{d}x\geq0. \label{aux6}%
\end{equation}

Using again that $d_{\Omega}-u_{\infty}\geq0$ we have
\begin{equation}
\int_{\Omega}h_{p}(d_{\Omega}-u_{p})\mathrm{d}x=\int_{\Omega}h_{p}(d_{\Omega
}-u_{\infty})\mathrm{d}x+\int_{\Omega}h_{p}(u_{\infty}-u_{p})\mathrm{d}%
x\geq\int_{\Omega}h_{p}(u_{\infty}-u_{p})\mathrm{d}x. \label{aux7}%
\end{equation}

The uniform convergence from $u_{p}$ to $u_{\infty}$ combined with and
(\ref{aux2}) yields%
\[
\lim_{p\rightarrow\infty}\int_{\Omega}h_{p}(u_{\infty}-u_{p})\mathrm{d}x=0
\]
since
\[
\left\vert \int_{\Omega}h_{p}(u_{\infty}-u_{p})\mathrm{d}x\right\vert
\leq\left\Vert u_{\infty}-u_{p}\right\Vert _{\infty}\left\Vert h_{p}%
\right\Vert _{\infty}\left\vert \Omega\right\vert .
\]

Thus, it follows from (\ref{aux7}) that
\begin{equation}
\liminf_{p\rightarrow\infty}\int_{\Omega}h_{p}(d_{\Omega}-u_{p})\mathrm{d}%
x\geq0. \label{aux5}%
\end{equation}

Hence,%
\begin{equation}
\liminf_{p\rightarrow\infty}\left[  \frac{1}{q}\int_{\Omega}(d_{\Omega}%
^{q}-u_{p}^{q})\mathrm{d}x+\int_{\Omega}h_{p}(d_{\Omega}-u_{p})\mathrm{d}%
x\right]  \geq0. \label{aux4}%
\end{equation}

Combining (\ref{aux3}) and (\ref{aux4}) we obtain%
\[
\lim_{p\rightarrow\infty}\left[  \frac{1}{q}\int_{\Omega}(d_{\Omega}^{q}%
-u_{p}^{q})\mathrm{d}x+\int_{\Omega}h_{p}(d_{\Omega}-u_{p})\mathrm{d}x\right]
=0.
\]

In view of (\ref{aux6}) and (\ref{aux5}) we arrive at%
\[
\frac{1}{q}\int_{\Omega}(d_{\Omega}^{q}-u_{\infty}^{q})\mathrm{d}x=0.
\]

Therefore, using once more that $u_{\infty}\leq d_{\Omega}$ we conclude that
$u_{\infty}=d_{\Omega}.$

Observing that the limit function is always $d_{\Omega}$ we conclude that
$u_{p}$ converges uniformly to $d_{\Omega}$ in $\overline{\Omega}$
(independently of subsequences).
\end{proof}

\begin{proof}
[Proof of Theorem \ref{main}]Let $p_{0}>\max\left\{  q,a+b,l\right\}  $ be
such that $0<m<m_{p}$ for all $p>p_{0}.$ This follows by combining
(\ref{limp}) with the fact that $m<m_{\infty}.$ Thus, if $p>p_{0}$ the
existence of $u_{p}$ follows from Corollary \ref{up} and the convergence
(\ref{upd}) follows from Proposition \ref{main2}.
\end{proof}

\section*{Acknowledgments}

Anderson L. A. de Araujo was partially supported by FAPEMIG/Brazil
APQ-02375-21, RED-00133-21 and by CNPq/Brazil 307575/2019-5. Grey Ercole was
partially supported by FAPEMIG/Brazil PPM-00137-18, CNPq/Brazil 305578/2020-0
and FAPDF 04/2021.

\end{document}